\numberwithin{equation}{section}
\newtheorem{thm}{Theorem}[section]
\newtheorem{cor}[thm]{Corollary}
\newtheorem{lem}[thm]{Lemma}
\newtheorem{prop}[thm]{Proposition}
\newtheorem{defn}[thm]{Definition}
\newtheorem{rem}[thm]{Remark}
 \DeclareMathOperator{\Tor}{Tor}
\DeclareMathOperator{\Tot}{Tot}
\DeclareMathOperator{\Ext}{Ext} \DeclareMathOperator{\Supp}{Supp}
\DeclareMathOperator{\V}{V} \DeclareMathOperator{\Hom}{Hom}
\DeclareMathOperator{\Ker}{Ker} 
\DeclareMathOperator{\Image}{Im} 
\DeclareMathOperator{\cd}{cd} 
 \DeclareMathOperator{\Max}{Max}
\DeclareMathOperator{\lc}{H}
\DeclareMathOperator{\e}{E} \DeclareMathOperator{\T}{T}
\DeclareMathOperator{\h}{H}
\DeclareMathOperator{\pd}{pd}
\DeclareMathOperator{\Ass}{Ass}
\DeclareMathOperator{\dimSupp}{dimSupp}
\newcommand{\fa}{\mathfrak{a}}
\newcommand{\fm}{\mathfrak{m}}
\newcommand{\lo}{\longrightarrow}
\begin{document}

\title[Cofiniteness  of generalized local
cohomology modules ]
{Cofiniteness  of generalized local
	cohomology modules with respect to a system of ideals}

\bibliographystyle{amsplain}

\author[M. Aghapournahr$^{1}$]{Moharram Aghapournahr$^{1}$}
\address{$^{1}$Department of Mathematics, Faculty of Science, Arak University,
Arak, 38156-8-8349, Iran.}
\curraddr{}
\email{m-aghapour@araku.ac.ir}

\author{TRAN TUAN NAM$^{2}$}
\address{$^{2}$Department of Mathematics-Informatics, Ho Chi Minh University of  Pedagogy, Ho Chi Minh city, Vietnam.}
\curraddr{}
\email{namtuantran@gmail.com, namtt@hcmue.edu.vn}

\author{NGUYEN THANH NAM$^{3}$}
\address{$^{3}$Faculty of Mathematics \& Computer Science, University of Science, VNU-HCM, Ho Chi Minh City, Vietnam}
\curraddr{} \email{hoanam8779@gmail.com}

\author{NGUYEN MINH TRI$^{*4,5}$}
\address{$^4$Vietnam National University, Ho Chi Minh City, Vietnam}

\address{$^5$Department of Mathematics and Physics, University of Information Technology, Ho Chi Minh City, Vietnam
}
\email{trinm@uit.edu.vn, triminhng@gmail.com}

\thanks{$^*$ Corresponding author}

\thanks{The last three authors are  funded by Vietnam National Foundation for Science and Technology Development (NAFOSTED) under Grant No.	101.04-2021.03}
\date{}



\keywords{Local cohomology, ${\rm FD_{\leq n}}$ modules, cofinite modules,  $ETH$-cofinite modules,
minimax modules}

\subjclass[2010]{13D45, 13E05, 14B15.}


\begin{abstract}
Let $R$ be a commutative Noetherian ring, $\Phi$ a system of ideals of $R$ and $M,X$ two $R$-modules. In this paper, we study the  Artinianness and cofiniteness  of the module $H^i_{\Phi}(M,X)$ which is an extension of the generalized local cohomology modules of Herzog.
\end{abstract}


\maketitle


\section{Introduction}


Throughout this paper, $R$ is a commutative noetherian ring and $M$ is a finitely generated $R$-module.
Let $\Phi$ be a non-empty set of ideals of $R.$ We call $\Phi$ a system of ideals of $R$
if  $\mathfrak{a}, \mathfrak{b} \in \Phi,$ then there is an ideal $\mathfrak{c} \in\Phi$  such that $\mathfrak{c} \subseteq \mathfrak{a}\mathfrak{b}$ (see  \cite{BZ2}) For
every $R$-module $X,$ one can define
$$\Gamma_\Phi(X) = \{ x \in X \mid \mathfrak{a}x = 0 \text{ for some } \mathfrak{a} \in \Phi \}.$$
Then $\Gamma_\Phi(-)$ is an additive,
covariant, $R$-linear functor from the category
of $R$-modules to itself. Moreover, the functor $\Gamma_\Phi(-)$ is  left exact.  The functor $\Gamma_\Phi(-)$ is called the general local cohomology functor with respect to the system $\Phi$. For each $i \ge 0,$ the $i$th right derived functor of $\Gamma_\Phi(-)$ is denoted by $\lc^i_\Phi(-).$ For an ideal $\fa$ of $R,$ if $ \Phi = \{\fa^n\mid n \in\mathbb{N}_0\},$ then the functor $\lc^i_\Phi(-)$ coincides
with the ordinary local cohomology functor $\lc^i_\fa(-).$ Some basic properties of the module $\lc^i_\Phi(X)$ were shown in \cite{AB1}, \cite{AKS}, \cite{BZ2},  \cite{BZ3}  and \cite{hajcof}.

In \cite{BZ1}, the author introduced the bifunctor $\lc^i_\Phi(-,-)$ as follows: Let $M,X$ be two $R$-modules, the module $\lc_{\Phi}^i(M,X)$ is defined as
$$\lc_{\Phi}^i(M,X)\cong  \mathop {\lim }\limits_{\begin{subarray}{c}
	\longrightarrow  \\
	\fa\in \Phi
	\end{subarray}} \Ext_R^i(M/{\fa}M,X).$$
Then $\lc^i_\Phi( -,- )$ is an additive, $R$-linear functor which is contravariant in the first variable and covariant in the second variable. If $ \Phi = \{\fa^n\mid n \in\mathbb{N}_0\},$ then the bifunctor $\lc^i_\Phi(-,-)$ is naturally
equivalent to the bifunctor  $\lc^i_\fa(-,-)$ of Herzog in \cite{herkom}.

The cofiniteness and Artinianness of local cohomology modules are two of the most interesting properties which have been studied by many authors. In the case of local cohomology modules with respect to a system of ideals, the  $\fa$-cofinite modules of Hartshorne  was modified and called $\Phi$-cofinite modules in \cite{AKS}. We can see some conditions such that the module $H^i_\Phi(X)$ is $\Phi$-cofinite in  \cite{AB1} and \cite[4.8, 4.14, 4.15]{AKS}.  As an special case of  \cite[Definition 2.1]{Y} and generalization of FSF
modules (see \cite[Definition 2.1]{hung}), in \cite[Definition 2.1]{AB} the first author of present paper and Bahmanpour introduced the class of {\it{${\rm FD_{\leq n}}$}} modules. A module
$M$ is said to be ${\rm FD_{\leq n}}$ module, if there exists a finitely generated
submodule  $N$ of $M$ such that $\dim M/N\leq n$. Note that the class of ${\rm
	FD_{\leq -1}}$ is the same as finitely generated $R$-modules. Recall that an
$R$-module $M$ is called \emph{weakly Laskerian} if $\Ass_R(M/N)$ is a
finite set for each  submodule $N$ of $M$. The class of weakly Laskerian
modules is introduced in \cite{DiM}.
Recall also that a module $M$ is a \emph{minimax} module if there is a
finitely generated submodule $N$ of $M$ such that the quotient module $M/N$
is Artinian.  Minimax modules have been studied by
Z\"{o}schinger in
\cite{Zrmm}.

In this paper, we introduce the concept of $\Phi$-cofiniteness with respect to module $\lc^i_\Phi(M,X).$ In Section 2, we extend some results on the cofiniteness of the generalized local cohomology modules with respect to an ideal of Herzog. We prove in Theorem \ref{T:cdle1} that the module $\lc^i_\Phi(M,X)$ is $\Phi$-cofinite for all $i\ge 0$ if $\mathrm{cd}(\fa)\le 1$ for all $\fa\in \Phi$ (or $\cd \Phi\leq 1$). Next, we prove the following important and key proposition:

\begin{prop}{\rm(}See Proposition \ref{he}{\rm)}
	Let $R$ be a Noetherian ring, $\fa\in \Phi$ an ideal of $R$ and $M$ a finitely generated $R$-module.  Let $t\in\mathbb{N}_0$ be an integer and $X$ an $R$-module such that the $R$-modules $\Ext^i_R(R/\fa,X)$ are finite for all $i\leq t+1$ and the $R$-module $\lc^i_\Phi(M,X)$ is $\Phi$-cofinite for all
	$i<t$. Then the $R$-modules ${\rm
		Hom}_R(R/\fa,\lc^{t}_\Phi(M,X))$ and ${\rm Ext}^{1}_R(R/\fa,\lc^{t}_\Phi(M,X))$ are finitely generated.
\end{prop}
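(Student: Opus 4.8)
The plan is to argue by induction on $t$, shifting the cohomological degree by changing the second variable $X$.

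\emph{Base case $t=0$.} Here $\lc^0_\Phi(M,X)=\G_\Phi(\Hom_R(M,X))$, and since a homomorphism $M\to X$ killed by $\fa$ has image inside $(0:_X\fa)\subseteq\G_\Phi(X)$ and conversely, we get $\Hom_R(R/\fa,\lc^0_\Phi(M,X))\cong\Hom_R(R/\fa,\Hom_R(M,X))\cong\Hom_R(M/\fa M,X)$. A presentation $(R/\fa)^q\to(R/\fa)^p\to M/\fa M\to 0$ realises this as a submodule of $\Hom_R(R/\fa,X)^p$, hence finitely generated. For the $\Ext^1$ term, apply $\Hom_R(R/\fa,-)$ to $0\to\G_\Phi(\Hom_R(M,X))\to\Hom_R(M,X)\to Q\to 0$ with $Q$ being $\Phi$-torsion-free; since $(0:_Q\fa)=0$, the module $\Ext^1_R(R/\fa,\lc^0_\Phi(M,X))$ embeds in $\Ext^1_R(R/\fa,\Hom_R(M,X))$, and the latter is finitely generated by splitting a finite free presentation $F_1\to F_0\to M\to 0$ into short exact sequences and using that $\Hom_R(R/\fa,X)$ and $\Ext^1_R(R/\fa,X)$ are finitely generated.

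\emph{Reduction to $\G_\Phi(X)=0$.} Let $t\ge 1$, put $Y=\G_\Phi(X)$ and $\overline X=X/Y$. It is well known that $\lc^i_\Phi(M,Y)\cong\Ext^i_R(M,Y)$ for all $i$ when $Y$ is $\Phi$-torsion (this follows from the defining direct limit, or from the collapse of the spectral sequence $\Ext^p_R(M,\lc^q_\Phi(X))\Rightarrow\lc^{p+q}_\Phi(M,X)$). Since $\Hom_R(R/\fa,Y)=\Hom_R(R/\fa,X)$ and $\Ext^1_R(R/\fa,Y)\hookrightarrow\Ext^1_R(R/\fa,X)$ are finitely generated, a Melkersson-type criterion shows $Y$ is $\Phi$-cofinite, and therefore so is every $\Ext^i_R(M,Y)$, being built from finitely many copies of $Y$. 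Feeding this into the long exact sequence of $0\to Y\to X\to\overline X\to 0$ shows that $\overline X$ still satisfies the hypotheses of the proposition for the same $t$, and that $\lc^t_\Phi(M,X)$ and $\lc^t_\Phi(M,\overline X)$ agree up to $\Phi$-cofinite sub- and quotient modules; so it suffices to treat $\overline X$, i.e.\ we may assume $\G_\Phi(X)=0$.

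\emph{Degree shift and induction.} Assume $\G_\Phi(X)=0$ and embed $X$ in an injective module $E$ with $\G_\Phi(E)=0$ (for instance $E=\e(X)$, since no associated prime of $X$ contains a member of $\Phi$). Because $E$ is injective, $\Ext^i_R(-,E)=0$ for $i\ge 1$, while $\lc^0_\Phi(M,E)=\G_\Phi(\Hom_R(M,E))=0$ as any homomorphism killed by $\fa$ lands in $(0:_E\fa)\subseteq\G_\Phi(E)=0$; hence $\lc^i_\Phi(M,E)=0$ and $\Ext^i_R(R/\fa,E)=0$ for all $i$. The long exact sequence of $0\to X\to E\to E/X\to 0$ then yields $\lc^i_\Phi(M,E/X)\cong\lc^{i+1}_\Phi(M,X)$ and $\Ext^i_R(R/\fa,E/X)\cong\Ext^{i+1}_R(R/\fa,X)$ for all $i\ge 0$, so $E/X$ satisfies the hypotheses of the proposition with $t$ replaced by $t-1$. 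By the induction hypothesis, $\Hom_R(R/\fa,\lc^{t-1}_\Phi(M,E/X))$ and $\Ext^1_R(R/\fa,\lc^{t-1}_\Phi(M,E/X))$ are finitely generated, and since $\lc^{t-1}_\Phi(M,E/X)\cong\lc^t_\Phi(M,X)$ this is exactly what we want.

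\emph{Main obstacle.} The step I expect to be delicate is the torsion reduction: one must check carefully that the long exact sequence of $0\to Y\to X\to\overline X\to 0$ transfers \emph{both} the finiteness of $\Ext^i_R(R/\fa,-)$ up to degree $t+1$ \emph{and} the $\Phi$-cofiniteness of the generalized local cohomology modules in degrees $<t$ from $X$ to $\overline X$; this forces control of $\Ext^i_R(M,Y)$ for $i\le t$, which relies on the identification $\lc^i_\Phi(M,Y)\cong\Ext^i_R(M,Y)$ for $\Phi$-torsion $Y$ together with the fact that a $\Phi$-torsion module with finitely generated $\Hom_R(R/\fa,-)$ and $\Ext^1_R(R/\fa,-)$ is $\Phi$-cofinite. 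By contrast, the degree shift through an injective module and the base case are routine.
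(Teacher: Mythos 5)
Your overall strategy (kill $\G_\Phi(X)$, then shift degrees through an injective module with vanishing $\G_\Phi$) is genuinely different from the paper's argument, which compares the two spectral sequences of a Cartan--Eilenberg resolution of $\Hom_R(M,\G_\Phi(\e^{\bullet}))$ converging to the same total complex (Theorems \ref{v1} and \ref{v2}) and reads off the finiteness of $\Hom_R(R/\fa,\lc^t_\Phi(M,X))$ and $\Ext^1_R(R/\fa,\lc^t_\Phi(M,X))$ from the abutment. Your base case and your degree-shift step are fine. But the torsion-reduction step --- which you yourself flag as the delicate one --- contains a genuine gap that the rest of the argument cannot survive.

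The problem is the sentence ``a Melkersson-type criterion shows $Y$ is $\Phi$-cofinite.'' There is no general criterion asserting that a $\Phi$-torsion (or $\fa$-torsion) module with finitely generated $\Hom_R(R/\fa,-)$ and $\Ext^1_R(R/\fa,-)$ has finitely generated $\Ext^i_R(R/\fa,-)$ in \emph{all} degrees. Every known result of this shape carries an extra smallness hypothesis: the module must be minimax (Melkersson), weakly Laskerian, or ${\rm FD_{\leq 1}}$ (this is exactly Lemma \ref{wl}, i.e.\ \cite[Theorem 3.1]{AB1}), or one must assume $\dim R/\fa\leq 1$ as in Bahmanpour--Naghipour. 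Here $Y=\G_\Phi(X)$ is an arbitrary $\Phi$-torsion module, since $X$ is only assumed to satisfy finiteness of $\Ext^i_R(R/\fa,X)$ for $i\leq t+1$, so none of these hypotheses is available. Without cofiniteness of $Y$ you cannot transfer the hypotheses to $\overline X$: the exact sequence $\Ext^i_R(R/\fa,X)\to\Ext^i_R(R/\fa,\overline X)\to\Ext^{i+1}_R(R/\fa,Y)$ requires control of $\Ext^j_R(R/\fa,Y)$ up to $j=t+2$, and the comparison of $\lc^i_\Phi(M,X)$ with $\lc^i_\Phi(M,\overline X)$ requires the modules $\Ext^i_R(M,Y)$ to be cofinite. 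That last step has a second, compounding difficulty: even granting that $Y$ is $\fa$-$ETH$-cofinite, the modules $\Ext^i_R(M,Y)$ are kernels modulo images of maps between finite direct sums of copies of $Y$, and whether the class of cofinite modules is closed under such subquotients is precisely the (in general unresolved) abelianness problem for cofinite modules; it is not the routine fact your phrase ``being built from finitely many copies of $Y$'' suggests. To repair the argument you would either need to impose an ${\rm FD_{\leq 1}}$-type hypothesis on $\G_\Phi(X)$ (changing the statement) or replace the reduction by an argument, like the paper's double spectral sequence, that never needs cofiniteness of $\G_\Phi(X)$ at all.
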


Using the above proposition we prove the following main result:

\begin{thm}{\rm(}See Lemma \ref{homext1}{\rm)}
	 Let $R$ be a Noetherian ring, $\fa\in \Phi$ an ideal of $R$ and $M$ a finitely generated $R$-module.  Let $t\in\mathbb{N}_0$ be an integer and $X$ an $R$-module such that the $R$-modules $\Ext^i_R(R/\fa,X)$ are finitely generated for all $i\leq t+1$ and the $R$-modules $\lc^i_\Phi(M,X)$ are ${\rm FD_{\leq1}}$ ~ $R$-modules for all $i<t$.
	Then, the following conditions hold:
	
	{\rm(i)} The $R$-modules $\lc^i_\Phi(M,X)$ are $\fa$-$ETH$-cofinite {\rm{(}}in particular $\Phi$-cofinite{\rm{)}} for all
	$i<t$.
	
	{\rm(ii)} For all ${\rm FD_{\leq 0}}$ {\rm{(}}or minimax{\rm{)}} submodule $N$ of
	$\lc^{t}_\Phi(M,X)$, the $R$-modules $${\Hom}_R(R/\fa,\lc^{t}_\Phi(M,X)/N)\,\,\, {\rm and}\,\,\,{\rm Ext}^1_R(R/\fa,\lc^{t}_\Phi(M,X)/N)$$ are finitely generated.
\end{thm}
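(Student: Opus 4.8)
The plan is to run an induction on $t$, using the key Proposition \ref{he} to get the "finite $\Hom$ and $\Ext^1$" input and then feeding it into the ${\rm FD}$-module machinery to upgrade it to $ETH$-cofiniteness. First I would dispose of the base case $t=0$: then statement (i) is vacuous, and (ii) follows directly from Proposition \ref{he} applied with $N=0$, since the hypotheses on $\Ext^i_R(R/\fa,X)$ for $i\le 1$ are exactly what that proposition needs, while the hypothesis on $\lc^i_\Phi(M,X)$ for $i<0$ is vacuous; for a general ${\rm FD_{\le 0}}$ or minimax submodule $N$ one uses the long exact sequence in $\Ext_R(R/\fa,-)$ attached to $0\to N\to \lc^0_\Phi(M,X)\to \lc^0_\Phi(M,X)/N\to 0$ together with the fact that $\Ext^i_R(R/\fa,N)$ is finitely generated for ${\rm FD_{\le 0}}$ (equivalently minimax) modules $N$ (this is a standard fact; an ${\rm FD_{\le 0}}$ module is minimax up to a finitely generated piece, and for minimax $N$ the modules $\Ext^i_R(R/\fa,N)$ and $\Tor^R_i(R/\fa,N)$ are finitely generated).

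For the inductive step, assume $t\ge 1$ and that the theorem holds for smaller values. Since the modules $\lc^i_\Phi(M,X)$ are ${\rm FD_{\le 1}}$ for all $i<t$, in particular for $i<t-1$, the induction hypothesis gives that $\lc^i_\Phi(M,X)$ is $\fa$-$ETH$-cofinite (hence $\Phi$-cofinite) for all $i<t-1$; combined with the ${\rm FD_{\le 1}}$ hypothesis on $\lc^{t-1}_\Phi(M,X)$ and part (ii) of the induction hypothesis, one wants to conclude that $\lc^{t-1}_\Phi(M,X)$ itself is $\fa$-$ETH$-cofinite. Here is where the structure of ${\rm FD_{\le 1}}$ modules enters: an ${\rm FD_{\le 1}}$ module $H$ has a finitely generated submodule $N$ with $\dim H/N\le 1$; for such a quotient one can split off a further submodule so as to reduce, via short exact sequences and the fact that ${\rm FD_{\le 1}}$ modules satisfying "$\Hom_R(R/\fa,-)$ and $\Ext^1_R(R/\fa,-)$ finitely generated" are $\fa$-$ETH$-cofinite — this is the analogue, in the $ETH$/system-of-ideals setting, of results like \cite[Definition 2.1]{AB} and related cofiniteness criteria for ${\rm FD_{\le 1}}$ modules. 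Concretely: apply the induction hypothesis (ii) with $t$ replaced by $t-1$ and $N$ a suitable ${\rm FD_{\le 0}}$ submodule of $\lc^{t-1}_\Phi(M,X)$ to see that $\Hom_R(R/\fa,\lc^{t-1}_\Phi(M,X)/N)$ and $\Ext^1_R(R/\fa,\lc^{t-1}_\Phi(M,X)/N)$ are finitely generated, then combine with the already-known cofiniteness of the lower cohomologies to deduce (i) for $i=t-1$, and hence for all $i<t$.

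Once (i) is established, part (ii) follows by applying Proposition \ref{he} directly: its hypotheses are that $\Ext^i_R(R/\fa,X)$ is finite for $i\le t+1$ (we have this) and $\lc^i_\Phi(M,X)$ is $\Phi$-cofinite for all $i<t$ (now established in part (i)), so the proposition yields that $\Hom_R(R/\fa,\lc^t_\Phi(M,X))$ and $\Ext^1_R(R/\fa,\lc^t_\Phi(M,X))$ are finitely generated. To pass from $\lc^t_\Phi(M,X)$ to the quotient $\lc^t_\Phi(M,X)/N$ by an ${\rm FD_{\le 0}}$ (or minimax) submodule $N$, use the long exact $\Ext_R(R/\fa,-)$-sequence of $0\to N\to \lc^t_\Phi(M,X)\to \lc^t_\Phi(M,X)/N\to 0$ together with finite generation of $\Ext^i_R(R/\fa,N)$ for $i=0,1,2$; the only subtlety is that $\Ext^1_R(R/\fa,\lc^t_\Phi(M,X)/N)$ sits between $\Ext^1_R(R/\fa,\lc^t_\Phi(M,X))$ and $\Ext^2_R(R/\fa,N)$ in the exact sequence, both of which are finitely generated, so it is finitely generated as well. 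I expect the main obstacle to be the inductive step for part (i): making rigorous the reduction from an ${\rm FD_{\le 1}}$ module with finite $\Hom$ and $\Ext^1$ against $R/\fa$ to $\fa$-$ETH$-cofiniteness, i.e.\ showing that the $ETH$-cofinite modules form a suitable Serre-type subcategory closed under the relevant extensions, and handling the dimension $\le 1$ quotient by peeling off its finite-length and one-dimensional parts — this is the technical heart and will require the careful bookkeeping with short exact sequences that the ${\rm FD}$-module literature uses.
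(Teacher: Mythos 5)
Your overall strategy is the one the paper uses: induct on $t$, feed Proposition \ref{he} the cofiniteness of the lower cohomologies to get finite generation of $\Hom_R(R/\fa,\lc^{t-1}_\Phi(M,X))$ and $\Ext^1_R(R/\fa,\lc^{t-1}_\Phi(M,X))$, upgrade this to $\fa$-$ETH$-cofiniteness using the ${\rm FD_{\leq 1}}$ hypothesis, and then handle part (ii) with the long exact $\Ext_R(R/\fa,-)$-sequence of $0\to N\to \lc^{t}_\Phi(M,X)\to \lc^{t}_\Phi(M,X)/N\to 0$. The step you flag as ``the technical heart'' --- that an ${\rm FD_{\leq 1}}$ module with $\Hom_R(R/\fa,-)$ and $\Ext^1_R(R/\fa,-)$ finitely generated is $\fa$-$ETH$-cofinite --- is not something you need to re-derive by peeling off pieces of the dimension~$\leq 1$ quotient: it is exactly Lemma \ref{wl}, quoted from \cite[Theorem 3.1]{AB1}, and the paper simply cites it. So that part of your worry dissolves.

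There is, however, one genuine error in your argument: the claim that $\Ext^i_R(R/\fa,N)$ is finitely generated for every minimax (or ${\rm FD_{\leq 0}}$) module $N$ is false. For instance, over a local ring $(R,\fm)$ with $\dim R/\fa\geq 1$, the Artinian (hence minimax) module $E(R/\fm)$ has $\Hom_R(R/\fa,E(R/\fm))\cong E_{R/\fa}(R/\fm)$, which is not finitely generated. What saves the argument in both your base case and your part (ii) is that $N$ is not an arbitrary minimax module but a submodule of $\lc^{t}_\Phi(M,X)$, whose $\Hom_R(R/\fa,-)$ has already been shown to be finitely generated; hence $\Hom_R(R/\fa,N)$ is finitely generated, and \emph{then} the Melkersson-type criterion \cite[Theorem 2.7]{AB1} yields that $N$ is $\fa$-$ETH$-cofinite, so all $\Ext^i_R(R/\fa,N)$ are finitely generated. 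This is the route the paper takes, and you should insert that intermediate step. (Also, minimax implies ${\rm FD_{\leq 0}}$ but not conversely --- an infinite direct sum of copies of $R/\fm$ is ${\rm FD_{\leq 0}}$ and not minimax --- so ``equivalently'' is not accurate, though it does not affect the proof.)
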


As consequences of above main theorem we prove Corollaries \ref{T:minimaxcofinite}, \ref{nam}, \ref{cof}, \ref{wl1} and \ref{cof2}.

In Section 3, we study $\Phi$-cofinite Artinianness of $\lc^i_\Phi(M,X)$. More precisely we prove the following theorem:

\begin{thm}{\rm(}See Theorems \ref{T:pd+dim-IcofArtin} and \ref{moh}{\rm)}
	Let $\Phi$ be a system of ideals of a Noetherin ring $R$ and $M,X$ two finitely generated $R$-modules. Then the following statements hold:
	\begin{itemize}
	\item[(i)] If $(R,\fm)$ is a local ring and $\pd(M)<\infty$, then $\lc^{\mathrm{pd}(M)+\dim (R)}_{\Phi}(M,R)$ is $\Phi$-cofinite Artinian.
	\item[(ii)]	If $R$ is a semi-local and  $\dim R/{\fa}\leq0$ for all $\fa\in \Phi$, then $\lc^{i}_{\Phi}(M,X)$ is $\Phi$-cofinite Artinian for all $i\ge 0.$
	\end{itemize}	
\end{thm}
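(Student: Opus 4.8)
The plan is to treat the two parts separately, since they rely on quite different mechanisms. For part (i), the strategy is to reduce to ordinary local cohomology and then exploit the spectral sequence connecting generalized and ordinary local cohomology. First I would recall that for each $\fa \in \Phi$ one has the Grothendieck spectral sequence $\Ext^p_R(M, \lc^q_\fa(R)) \Rightarrow \lc^{p+q}_\fa(M,R)$, and, passing to the direct limit over $\Phi$, a corresponding spectral sequence $\Ext^p_R(M, \lc^q_\Phi(R)) \Rightarrow \lc^{p+q}_\Phi(M,R)$. Since $\pd(M) < \infty$ we have $\Ext^p_R(M,-) = 0$ for $p > \pd(M)$, and since $\dim R = \dim(R)$ we have $\lc^q_\Phi(R) = 0$ for $q > \dim R$ (the general local cohomology vanishes above the dimension, as it is a subquotient behavior inherited from ordinary local cohomology via the cofinality of $\Phi$). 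Hence the only contribution to $\lc^{\pd(M)+\dim R}_\Phi(M,R)$ on the corner of the spectral sequence is the single term $\Ext^{\pd(M)}_R(M, \lc^{\dim R}_\Phi(R))$, which, being on the edge, gives $\lc^{\pd(M)+\dim R}_\Phi(M,R) \cong \Ext^{\pd(M)}_R(M, \lc^{\dim R}_\Phi(R))$. Now $\lc^{\dim R}_\Phi(R)$ is an Artinian $\Phi$-cofinite module — Artinianness of the top general local cohomology of $R$ follows from the local case by the limit argument, and its $\Phi$-cofiniteness should already be available from the cited results of \cite{AKS}; then applying $\Ext^{\pd(M)}_R(M,-)$ with $M$ finitely generated preserves Artinianness and $\Phi$-cofiniteness (the latter because $\Ext$ and $\Tor$ of a finitely generated module against a $\Phi$-cofinite module stay $\Phi$-cofinite, by a standard complex/induction argument on a finite free resolution of $M$).

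For part (ii), the hypothesis $\dim R/\fa \le 0$ for all $\fa \in \Phi$ together with $R$ semi-local forces $V(\fa) \subseteq \Max(R)$ for every $\fa \in \Phi$, so $\Gamma_\Phi$ is supported on finitely many maximal ideals. The plan is to show directly that each $\lc^i_\Phi(M,X)$ is $\fm_1 \cap \cdots \cap \fm_r$-torsion (hence has Artinian-type support) and is $\Phi$-cofinite Artinian. Concretely: take a minimal free resolution $F_\bullet \to M$; then $\lc^i_\Phi(M,X)$ is a subquotient of $\bigoplus \lc^j_\Phi(X)$-type terms via the hyper-derived-functor spectral sequence $\lc^p_\Phi(R/\fa\text{-direction})$... more precisely I would use the limit $\lc^i_\Phi(M,X) = \varinjlim \Ext^i_R(M/\fa M, X)$ and the fact that $M/\fa M$ is supported in $\Max(R)$, so each $\Ext^i_R(M/\fa M, X)$ is a module over a semi-local ring with support in the maximal ideals cutting out $\fa$; combined with $M,X$ finitely generated this $\Ext$ is finitely generated, and its support lies in $V(\fa)\subseteq\Max(R)$, hence it is of finite length, hence Artinian. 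The direct limit of these finite-length modules along the maps induced by $\fa \supseteq \fb$ need not itself be finite length, but it is $\Phi$-torsion and each element is killed by some $\fa \in \Phi$; Artinianness of the limit follows because $\lc^i_\Phi(M,X) = \lc^i_\Phi(M, \Gamma_\Phi(X)) \oplus$-type decomposition reduces to the case $X = \Gamma_\Phi(X)$ supported in $\Max(R)$, where $X$ is then Artinian (finitely generated plus support in maximal ideals of a semi-local ring) and $\lc^i_\Phi(M,-)$ of an Artinian module is Artinian. Finally $\Phi$-cofiniteness follows since $\Hom_R(R/\fa, \lc^i_\Phi(M,X))$ is a submodule of the Artinian module $\lc^i_\Phi(M,X)$ annihilated by a maximal-ideal-supported ideal, hence finitely generated (Artinian plus finitely cogenerated over Artinian local factors gives finite length), and $\Ext$ vanishes appropriately in positive degrees, or one invokes directly the criterion from \cite{AKS} that an Artinian $\Phi$-torsion module with $\Hom_R(R/\fa,-)$ finitely generated is $\Phi$-cofinite.

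The main obstacle I anticipate is in part (i): ensuring that the edge-map identification $\lc^{\pd(M)+\dim R}_\Phi(M,R) \cong \Ext^{\pd(M)}_R(M, \lc^{\dim R}_\Phi(R))$ is genuinely an isomorphism and not merely a subquotient. This requires that the spectral sequence degenerates sufficiently at the top anti-diagonal — which it does because every other lattice point $(p,q)$ with $p+q = \pd(M)+\dim R$ has either $p > \pd(M)$ (so $E_2^{p,q}=0$) or $q > \dim R$ (so $E_2^{p,q}=0$), leaving $E_2^{\pd(M),\dim R}$ as the only survivor, with no incoming or outgoing differentials possible into/out of it for degree reasons; one must spell this out carefully. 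A secondary technical point is justifying the passage of the Grothendieck spectral sequence through the direct limit over $\Phi$, i.e. that $\varinjlim$ commutes with the relevant derived functors — this is standard since direct limits are exact, but it should be stated. Part (ii)'s obstacle is more bookkeeping than conceptual: carefully reducing to the case where $X$ is $\Phi$-torsion (hence Artinian under the semi-local plus $\dim R/\fa \le 0$ hypotheses) via the exact sequence $0 \to \Gamma_\Phi(X) \to X \to X/\Gamma_\Phi(X) \to 0$ and checking that the quotient contributes nothing, or handling it by Noetherian induction; once that reduction is in place, everything follows from the Artinian-module behavior of $\lc^i_\Phi(M,-)$.
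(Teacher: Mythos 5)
Your reduction in part (i) to the single surviving corner term, giving $\lc^{\pd(M)+\dim R}_{\Phi}(M,R)\cong \Ext^{\pd(M)}_R(M,\lc^{\dim R}_{\Phi}(R))$, is exactly what the paper does, and the Artinianness is fine. The cofiniteness step, however, has a genuine gap: you assume that the $\Phi$-cofiniteness of $\lc^{\dim R}_{\Phi}(R)$ is ``already available from \cite{AKS}.'' It is not; in the paper this statement is Corollary \ref{C:dimphicofart}, which is \emph{deduced from} Theorem \ref{T:pd+dim-IcofArtin} (the case $M=R$), so your argument is essentially circular. The missing content is an induction on $d=\dim R$: for $d=0$ the ring is $\Phi$-torsion and $\lc^{p}_{\Phi}(M,R)\cong\Ext^p_R(M,R)$ is finitely generated; for $d>0$ one passes to $\overline{R}=R/\Gamma_{\Phi}(R)$, chooses $x\in\fa$ regular on $\overline{R}$, and uses the long exact sequence of $\lc^{\bullet}_{\Phi}(M,-)$ attached to $0\to\overline{R}\xrightarrow{x}\overline{R}\to\overline{R}/x\overline{R}\to 0$ to show that $\Hom_R(R/\fa,\lc^{p+d}_{\Phi}(M,\overline{R}))$ is finitely generated, and then invokes the Melkersson-type criterion (\cite[Lemma 2.5]{AB1}: an Artinian module with finitely generated $\Hom_R(R/\fa,-)$ is $\Phi$-cofinite). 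Without this (or an equivalent) induction, the cofiniteness in (i) is not proved.

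Part (ii) also has a gap. First, the proposed reduction to $X=\Gamma_{\Phi}(X)$ via the claim that ``the quotient contributes nothing'' is false: $X/\Gamma_{\Phi}(X)$ is precisely where all the higher modules $\lc^{i}_{\Phi}(M,X)$, $i>0$, live, and there is no direct sum decomposition of $\lc^i_{\Phi}(M,X)$ into torsion and torsion-free contributions. Second, even granting that each $\Ext^i_R(M/\fa M,X)$ has finite length, a direct limit of finite-length modules need not be Artinian (e.g.\ $\bigoplus_{n}R/\fm$), so the limit description alone does not yield Artinianness; some uniform bound on socles, or a prior Artinianness theorem, is required. The paper's route is different and does close these gaps: it first observes $\Supp_R(\lc^i_{\Phi}(X))\subseteq\Max(R)$ and quotes \cite[Corollary 3.4]{BA} to get that each $\lc^i_{\Phi}(X)$ is Artinian, and then feeds this into the Grothendieck spectral sequence $\Ext^p_R(M,\lc^q_{\Phi}(X))\Rightarrow\lc^{p+q}_{\Phi}(M,X)$ together with the ${\rm FD_{\leq 1}}$ cofiniteness machinery of Section 2 (Corollary \ref{nam} and Theorem \ref{homext1}) to obtain both the Artinianness and the $\Phi$-cofiniteness of $\lc^i_{\Phi}(M,X)$. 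You would need either to reproduce that Artinianness input or to prove it directly (say, via Melkersson's criterion applied to the $J$-torsion module $\lc^i_{\Phi}(M,X)$ with $J$ the Jacobson radical), which your sketch does not do.
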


As a consequence of part (i) of above theorem, we prove the following result:

\begin{thm}{\rm(}See Theorem \ref{T:dimle2}{\rm)}
	Let $(R,\fm)$ be a local ring and $M,X$ two finitely generated $R$-modules with $\mathrm{pd}(M)<\infty.$ If $\dim(X)\le 2$ or $\dim (M)\le 2,$ then $\lc^{i}_{\Phi}(M,X)$ is $\Phi$-cofinite for  all $i\ge 0.$
\end{thm}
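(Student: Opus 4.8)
The plan is to deduce Theorem \ref{T:dimle2} from part (i) of the preceding theorem, which guarantees that the top module $\lc^{\pd(M)+\dim(R)}_{\Phi}(M,R)$ is $\Phi$-cofinite Artinian, together with the machinery of Section 2 (especially the $\Phi$-cofiniteness criteria coming from Theorem~\ref{homext1} and its corollaries, and Theorem~\ref{T:cdle1}). First I would reduce to the case $X=R$ in the situation $\dim M \le 2$: since $\pd(M)<\infty$, a free presentation and an induction on the number of generators (or on $\pd M$) via the long exact sequence of $\lc^i_\Phi(-,X)$ applied to a short exact sequence $0\to K\to R^n\to M\to 0$ lets one propagate $\Phi$-cofiniteness, because $\Phi$-cofinite modules form a Serre-type subcategory in the relevant range and $\dim K\le \dim M$. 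So the heart of the matter is the two genuinely distinct cases $\dim X\le 2$ and $\dim M\le 2$.

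For the case $\dim X \le 2$: here the relevant vanishing is controlled by cohomological dimension. Since $\lc^i_\Phi(M,X)$ has support inside $\Supp(M/\fa M)\subseteq \V(\fa)$ for $\fa\in\Phi$, and $\dim X\le 2$, one gets $\lc^i_\Phi(M,X)=0$ for $i$ large (indeed for $i>\pd M+2$). The strategy is to induct downward from the top nonzero module. I would first handle $\dim X\le 1$ using Theorem~\ref{T:cdle1} (since $\cd(\fa)\le 1$ when $\dim R/\fa$-type bounds apply — more carefully, when $\dim X\le 1$ the modules $\lc^i_\Phi(M,X)$ vanish for $i\ge 2$ relative to $\pd M$, and a direct argument via Theorem~\ref{homext1} with $t$ running up shows $\Phi$-cofiniteness). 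Then for $\dim X=2$, use a short exact sequence $0\to \Gamma_\Phi(X)\to X\to \bar X\to 0$ to reduce to the case where $X$ is $\Phi$-torsion-free, so that $\Gamma_\Phi(X)$ is itself $\Phi$-cofinite (it's finitely generated since $\dim X\le 2$... actually one needs $\lc^0$ behaviour), and then pass to $\bar X \hookrightarrow E$ with $E$ $\Phi$-injective, giving $0\to \bar X\to E\to C\to 0$ with $\dim C\le 2$ and $\lc^i_\Phi(M,C)\cong \lc^{i+1}_\Phi(M,\bar X)$ for $i\ge 1$; dévissage then shifts the problem, and one invokes part~(i) of the theorem to anchor the top spot and Theorem~\ref{homext1}(i)–(ii) to climb down, using that $\lc^t_\Phi(M,X)/N$ being finitely-generated-modulo controlled pieces forces $\Phi$-cofiniteness when $\dim\le 2$ because the obstruction module is then $\mathrm{FD}_{\le 0}$.

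For the case $\dim M\le 2$ (with $\dim X$ arbitrary): here I would use the independence of the base behaviour via $M$. Since $\pd(M)<\infty$, work with a finite free resolution $F_\bullet\to M$; then $\lc^i_\Phi(M,X)$ is computed by $\lc^\bullet_\Phi$ applied to $\Hom(F_\bullet,X)$ via a spectral sequence $\lc^p_\Phi(\h^q(\Hom(F_\bullet,X)),\text{--})$ or more directly by the change-of-rings/Grothendieck spectral sequence $\Ext^p_R(M,\lc^q_\Phi(X))\Rightarrow \lc^{p+q}_\Phi(M,X)$. Because $\dim M\le 2$, $\Ext^p_R(M,-)$ vanishes for $p>\pd M$ but the support constraints give that only finitely many $\lc^q_\Phi(X)$ contribute with $\Supp$ of dimension $\le 2$; one then shows each $\lc^q_\Phi(X)$, while not finitely generated, has the property that $\Ext^p_R(M,\lc^q_\Phi(X))$ is $\Phi$-cofinite using that $\dim\Supp \le 2$ and a $2$-dimensional version of Hartshorne's theorem (again provided by Theorem~\ref{homext1} and its corollaries). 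Convergence of the spectral sequence in the abelian category of $\Phi$-cofinite modules (a Serre subcategory of the category of $R$-modules with support in $\V(\fa)$) then yields the claim.

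The main obstacle I expect is the $\dim=2$ dévissage: one-dimensional support is handled cleanly by Theorem~\ref{T:cdle1}, but at dimension two the modules $\lc^i_\Phi(M,X)$ are no longer forced to have finitely many associated primes a priori, so showing $\Hom_R(R/\fa,\lc^t_\Phi(M,X))$ and $\Ext^1_R(R/\fa,\lc^t_\Phi(M,X))$ are finitely generated — which by \cite{AKS}-type criteria is what $\Phi$-cofiniteness amounts to for a $\Phi$-torsion module — requires carefully feeding the correct hypotheses into Theorem~\ref{homext1}. The delicate point is verifying that the submodule $N$ appearing in Theorem~\ref{homext1}(ii) can be chosen $\mathrm{FD}_{\le 0}$ (equivalently, that the ``defect'' of $\lc^t_\Phi(M,X)$ from being $\Phi$-cofinite is a module of dimension $\le 0$), which is where the dimension bound $\dim X\le 2$ or $\dim M\le 2$ is genuinely used, via the vanishing $\lc^i_\Phi(M,X)=0$ for $i\ge \pd M+3$ and the anchoring provided by part~(i) of the previous theorem at the very top cohomological degree $\pd M+\dim R$. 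Once the top module is known $\Phi$-cofinite Artinian, a finite downward induction on $i$ closes the argument.
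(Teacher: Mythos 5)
There is a genuine gap, and in fact several of the load-bearing steps in your outline do not hold. The proposed reduction to $X=R$ via $0\to K\to R^n\to M\to 0$ fails twice over: a syzygy $K$ of a low-dimensional module is a submodule of $R^n$ and is typically full-dimensional, so $\dim K\le\dim M$ is false; and the ``propagation'' step presupposes that $\Phi$-cofinite modules form a Serre (or at least two-out-of-three) subcategory, which is exactly the open/false point in this subject for ideals of dimension $\ge 2$ --- the paper never invokes such a closure property, and your later appeal to ``convergence of the spectral sequence in the abelian category of $\Phi$-cofinite modules (a Serre subcategory)'' rests on the same unavailable fact. In the case $\dim M\le 2$ with $\dim X$ arbitrary, the spectral sequence $\Ext^p_R(M,\lc^q_\Phi(X))\Rightarrow\lc^{p+q}_\Phi(M,X)$ you choose does not truncate: $\lc^q_\Phi(X)$ can be nonzero for $q$ up to $\dim X$, and no ``$2$-dimensional Hartshorne theorem'' for the non-finitely-generated inputs $\lc^q_\Phi(X)$ is proved in the paper (Theorem \ref{homext1} needs ${\rm FD}_{\le 1}$ hypotheses on the cohomology modules themselves, which you have not verified). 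The $\dim X=2$ d\'evissage is likewise only a sketch: nothing forces the ``defect'' submodule $N$ to be ${\rm FD}_{\le 0}$, and anchoring a downward induction at degree $\pd(M)+\dim(R)$ does not connect to the modules $\lc^i_\Phi(M,X)$ for small $i$.

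The paper's actual argument is much more direct and handles both hypotheses at once. It uses the single Grothendieck spectral sequence $E_2^{p,q}=\lc^p_\Phi(\Ext^q_R(M,X))\Rightarrow\lc^{p+q}_\Phi(M,X)$ and the observation that $\Supp\Ext^q_R(M,X)\subseteq\Supp M\cap\Supp X$, so $\dim\Ext^q_R(M,X)\le\min(\dim M,\dim X)\le 2$ and hence $E_2^{p,q}=0$ for $p>2$ by Grothendieck vanishing. The filtration of $\lc^n_\Phi(M,X)$ therefore has only three graded pieces, each checked by hand: $E_\infty^{0,n}$ is finitely generated; $E_\infty^{1,n-1}=E_2^{1,n-1}=\lc^1_\Phi(\Ext^{n-1}_R(M,X))$ is $\Phi$-cofinite by \cite[Corollary 2.15]{AB1}; and $E_\infty^{2,n-2}$ is the quotient of the $\Phi$-cofinite Artinian module $E_2^{2,n-2}$ (Corollary \ref{C:dimphicofart}, i.e.\ the top local cohomology of a $2$-dimensional finitely generated module) by the finitely generated image of $d_2^{0,n-1}$, whose cofiniteness follows from Artinianness together with finiteness of $\Hom_R(R/\fa,-)$ via \cite[Lemma 2.5]{AB1}. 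Assembling the three pieces uses only explicitly verified finiteness of the relevant $\Hom$ and $\Ext$ modules, never a general Serre-subcategory property. If you want to salvage your approach, the key missing idea is to run the spectral sequence in the order $\lc^p_\Phi\circ\Ext^q_R(M,-)$ (not $\Ext^p_R(M,-)\circ\lc^q_\Phi$) so that the dimension hypothesis kills everything with $p>2$.
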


Throughout this paper, $R$ will always be a commutative Noetherian ring with
non-zero identity.  We denote $\{\frak p \in {\rm
	Spec}\,R:\, \frak p\supseteq \fa \}$ by $V(\fa)$. For any unexplained notation and terminology we refer the reader to
\cite{BH}, \cite{Mat} and \cite{BSh}.


\section{Cofiniteness}

The following definition is a generalization of the concept $\fa$-cofinite modules which was introduced by Hartshorne.
\begin{defn}[\cite{AKS}]
	Let $\Phi$ be a system of ideals of $R$ and $X$ an $R$-module. The general local cohomology
	module $\lc^j_\Phi(X)$ is called to be $\Phi$-cofinite if there exists an ideal $\fa \in \Phi$ such that
	$\Ext^i_R(R/\fa, \lc^j_\Phi(X))$ is finitely generated for all $i.$
\end{defn}
In the case of module $\lc^i_\Phi(M,X),$ where $M$ is a finitely generated $R$-module, we have a similar definition.
\begin{defn}
	Let $\Phi$ be a system of ideals of $R$. The module $\lc^j_\Phi(M,X)$ is called to be $\Phi$-cofinite if there exists an ideal $\fa \in \Phi$ such that
	$\Ext^i_R(R/\fa, \lc^j_\Phi(M,X))$ is finitely generated for all $i.$
\end{defn}
The cohomological dimension with respect to ideal $\fa$ is denoted by $\mathrm{cd}(\fa)$ and defined as:
$$\mathrm{cd}(\fa)=\sup\{i\mid \lc^i_\fa(X)\ne 0 \text{ for any $R$-module } X\}.$$
\begin{defn}{\rm(}\cite{AB1}{\rm)}
	For any system of ideals $\Phi$ of $R$ and any $R$-module $X,$ we
	denote the cohomological dimension of $X$ with respect to $\Phi$ by $\mathrm{cd}(\Phi, X)$ and
	define as
	$$\mathrm{cd}(\Phi, X) = \sup\{i\mid \lc^i_\Phi(X) \ne  0\},$$
	if this supremum exists, otherwise, we define it as $-\infty.$
	
	We denote $\mathrm{cd}(\Phi, R)$ by $\mathrm{cd} (\Phi)$, therefore $\mathrm{cd}(\Phi) \le \sup\{\mathrm{cd}(\fa)\mid \fa\in \Phi\}.$
\end{defn}
The first result deals with the cohomological dimension with respect to an ideal. It is also an extension of \cite[2.2]{mafcf} and \cite[Theorem 1.1]{CGH}.
\begin{thm}\label{T:cdle1} Let $M,X$ be two finitely generated $R$-modules.
	Assume that $\mathrm{cd} (\fa)\le 1$ for all $\fa\in \Phi$   {\rm{(}}or $\cd(\Phi)\leq 1${\rm{)}}. Then $\lc^i_\Phi(M,X)$ is $\Phi$-cofinite for all $i\ge 0.$
\end{thm}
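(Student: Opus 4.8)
The plan is to reduce the statement about the bifunctor $\lc^i_\Phi(M,X)$ to the known case of the ordinary generalized local cohomology $\lc^i_\fa(M,X)$ of Herzog, and then to the known cofiniteness results when $\cd(\fa)\le 1$. First I would recall the standard base change / spectral sequence relating $\Ext$ and $\lc_\Phi$: there is a Grothendieck-type spectral sequence
\[
\e_2^{p,q}=\lc^p_\Phi\bigl(\Ext^q_R(M,X)\bigr)\Longrightarrow \lc^{p+q}_\Phi(M,X),
\]
which follows by passing to the direct limit over $\fa\in\Phi$ in the corresponding spectral sequence for a fixed ideal $\fa$ (the latter being the classical one for $\lc^i_\fa(M,X)$, see Herzog \cite{herkom} or \cite{BZ1}). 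Since $M$ is finitely generated and $R$ Noetherian, each $\Ext^q_R(M,X)$ is a subquotient-friendly module; in particular, when $X$ is finitely generated so is $\Ext^q_R(M,X)$, but the theorem only assumes $M,X$ finitely generated, so I will use finiteness of $\Ext^q_R(M,X)$ freely.

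Next I would invoke the hypothesis $\cd(\fa)\le 1$ for all $\fa\in\Phi$ (equivalently $\cd(\Phi)\le 1$, using the inequality $\cd(\Phi)\le\sup\{\cd(\fa):\fa\in\Phi\}$ noted just before the statement). This forces the spectral sequence to be concentrated in the two columns $p=0,1$, hence it degenerates enough to produce short exact sequences
\[
0\lo \lc^1_\Phi\bigl(\Ext^{i-1}_R(M,X)\bigr)\lo \lc^i_\Phi(M,X)\lo \lc^0_\Phi\bigl(\Ext^i_R(M,X)\bigr)\lo 0
\]
for every $i\ge 0$. Thus $\lc^i_\Phi(M,X)$ is built, via one extension, out of $\lc^0_\Phi$ and $\lc^1_\Phi$ applied to the finitely generated modules $\Ext^i_R(M,X)$ and $\Ext^{i-1}_R(M,X)$. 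The class of $\Phi$-cofinite modules (with respect to a fixed suitable $\fa\in\Phi$) is closed under extensions, by the long exact sequence for $\Ext^*_R(R/\fa,-)$; so it suffices to show $\lc^0_\Phi(N)$ and $\lc^1_\Phi(N)$ are $\Phi$-cofinite for every finitely generated $N$ — and there I would cite the already-established single-module results for systems of ideals with $\cd(\Phi)\le 1$, namely \cite[4.8, 4.14, 4.15]{AKS} together with \cite{AB1}, which give exactly that $\lc^j_\Phi(N)$ is $\Phi$-cofinite for all $j$ once $\cd(\Phi)\le 1$ (this is the $M=R$ special case, essentially the theorem of Kawasaki/Bahmanpour–Naghipour for $\cd=1$ lifted to $\Phi$).

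The main obstacle I anticipate is book-keeping about which single ideal $\fa\in\Phi$ witnesses $\Phi$-cofiniteness at every stage. The short exact sequences above come from the limit spectral sequence, which is indexed over all of $\Phi$; to conclude cofiniteness I need one $\fa\in\Phi$ that simultaneously works for the sub- and quotient-modules. This is handled by the cofinality/directedness of $\Phi$: if $\fa_1$ works for $\lc^0_\Phi(\Ext^i_R(M,X))$ and $\fa_2$ for $\lc^1_\Phi(\Ext^{i-1}_R(M,X))$, pick $\fc\in\Phi$ with $\fc\subseteq\fa_1\fa_2$; since $\Ext^p_R(R/\fc,-)$ is finitely generated whenever $\Ext^p_R(R/\fa_j,-)$ is (standard, because $R/\fc$ has finite filtration with quotients supported in $V(\fa_j)$, or directly because $\Phi$-cofiniteness does not depend on the choice of defining ideal — see \cite[discussion after Definition]{AKS}), the single ideal $\fc$ witnesses cofiniteness of the whole extension $\lc^i_\Phi(M,X)$. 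I would also need the routine remark that $\lc^1_\Phi(N)=0$ whenever $N$ is $\Phi$-torsion-free-modulo-torsion is not needed; only closure under extensions and the two base cases are. With these pieces assembled, a short induction on $i$ (or simply the direct two-column argument, since the extension expresses every $\lc^i_\Phi(M,X)$ directly in terms of cofinite modules) finishes the proof.
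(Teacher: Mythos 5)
Your proposal is correct and follows essentially the same route as the paper: the Grothendieck-type spectral sequence $\e_2^{p,q}=\lc^p_\Phi(\Ext^q_R(M,X))\Rightarrow\lc^{p+q}_\Phi(M,X)$, its degeneration to the short exact sequence $0\to\lc^1_\Phi(\Ext^{i-1}_R(M,X))\to\lc^i_\Phi(M,X)\to\lc^0_\Phi(\Ext^i_R(M,X))\to 0$ under $\cd(\Phi)\le 1$, and the reduction to the known $\Phi$-cofiniteness of $\lc^1_\Phi$ of a finitely generated module (the paper cites \cite[2.15]{AB1}) together with the finite generation of $\lc^0_\Phi(\Ext^i_R(M,X))$. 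The only cosmetic differences are that the paper constructs the spectral sequence via the composite functor $\Gamma_\Phi\circ\Hom_R(M,-)$ rather than by a direct limit, and that your extra care about which single ideal $\fa\in\Phi$ witnesses cofiniteness is a welcome point the paper leaves implicit.
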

\begin{proof}
	Let $F=\Gamma_{\Phi}(-)$ and $G=\Hom_R(M,-)$ be functors from the category of $R$-modules to itself.
	For any $R$-module $K,$ by \cite[2.8]{hajcof} $$FG(K)=\Gamma_{\Phi}(\Hom_R(M,K))=\Hom_R(M,\Gamma_{\Phi}(K))=\Gamma_{\Phi}(M,K).$$ If $E$ is an injective $R$-module,  then so is
	$\Gamma_{\Phi}(E)$ by \cite[4.4]{AKS}. Let $\textbf{F}_\bullet$ be a free resolution of $M$
	$$\textbf{F}_\bullet: \cdots\to F_2\to F_1\to F_0\to M\to 0,$$
	where $F_i$ is finitely generated free for all $i\ge 0.$
	By applying the functor $\Hom_R(-,\Gamma_{\Phi}(E))$ to the free resolution of $M$, there is an exact sequence
	{\small $$0\to \Hom_R(M,\Gamma_{\Phi}(E))\to \Hom_R(F_0,\Gamma_{\Phi}(E))\to \Hom_R(F_1,\Gamma_{\Phi}(E))\to \cdots$$}and $\Hom_R(F_i,\Gamma_{\Phi}(E))$ is injective for all $i\ge 0.$
	Since $M$ is finitely generated, the exact sequence can be rewritten
	{\small $$0\to \Gamma_{\Phi}(\Hom_R(M,E))\to \Gamma_{\Phi}(\Hom_R(F_0,E))\to \Gamma_{\Phi}(\Hom_R(F_1,E))\to \cdots.$$}Note that $\Hom_R(\textbf{F}_\bullet,E)$ is an injective resolution of
	$\Hom_R(M,E).$ Then
	$R^iF(G(E))=\lc^i_{\Phi}(\Hom_R(M,E))=0$ for all $i>0.$  By \cite[10.47]{rotani}  there is a Grothendieck spectral sequence
	$$E_2^{p,q}=\lc^p_{\Phi}(\Ext^q_R(M,X))\underset{p}{\Rightarrow}\lc^{p+q}_{\Phi}(M,X).$$
Since $\mathrm{cd} (\fa)\le 1$ for all $\fa\in \Phi$   {\rm{(}}or $\cd(\Phi)\leq 1${\rm{)}}, we see that $E_2^{p,q}=0$ for all $q\ge 0,~ p>1.$ Let $n$ be a non-negative integer, there is  a filtration $\phi$ of submodules of $\lc^n=\lc^n_{\Phi}(M,X)$
	$$0=\phi^{n+1}\lc^n\subseteq \phi^n\lc^n\subseteq \ldots\subseteq \phi^1\lc^n\subseteq \phi^0\lc^n=\lc^n$$
	such that
	$$E_\infty^{i,n-i}\cong \phi^{i}\lc^n/\phi^{i+1}\lc^n$$
	for all $i\le n.$ Since $E_\infty^{p,q}$
	is a sub-quotient of $E_2^{p,q},$ it follows that $E_\infty^{p,q}=0$ for all $p>1.$ Consequently,
	$$\phi^2\lc^n=\phi^3\lc^n=\ldots=\phi^{n+1}\lc^n=0.$$
	Furthermore, $E_2^{1,n-1}=E_\infty^{1,n-1}\cong \phi^1\lc^n/\phi^2\lc^n=\phi^1\lc^n$ and $E_2^{0,n}=E_\infty^{0,n}\cong \phi^0\lc^n/\phi^1\lc^n.$ Hence, there is a short exact sequence
	$$0\to \lc^1_\Phi(\Ext_R^{n-1}(M,X))\to \lc^{n}_\Phi(M,X)\to \lc^0_\Phi(\Ext_R^n(M,X))\to 0.$$
	By \cite[2.15]{AB1}, the module $\lc^1_\Phi(\Ext_R^{n-1}(M,X))$ is $\Phi$-cofinite. Moreover, $\lc^0_\Phi(\Ext_R^n(M,X))$ is finitely generated, this implies that $\lc^{n}_\Phi(M,X)$ is $\Phi$-cofinite.
\end{proof}
\begin{cor}
	Let $M,X$ be two finitely generated $R$-modules. Assume that $\dim (R)\le 1.$ Then $\lc^i_\Phi(M,X)$ is $\Phi$-cofinite for all $i\ge 0.$
\end{cor}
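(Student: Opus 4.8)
The plan is to reduce this immediately to Theorem~\ref{T:cdle1} by checking that the hypothesis $\cd(\fa)\le 1$ for all $\fa\in\Phi$ is automatic once $\dim(R)\le 1$. The key input is Grothendieck's vanishing theorem: for any ideal $\fa$ of $R$ and any $R$-module $X$, one has $\lc^i_\fa(X)=0$ for all $i>\dim X$. Since $\dim X\le\dim R\le 1$ for every $R$-module $X$, this gives $\lc^i_\fa(X)=0$ for all $i>1$ and all $X$, i.e. $\cd(\fa)\le 1$. This holds in particular for every $\fa\in\Phi$, so the hypotheses of Theorem~\ref{T:cdle1} are satisfied.

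First I would invoke Theorem~\ref{T:cdle1} with this observation to conclude directly that $\lc^i_\Phi(M,X)$ is $\Phi$-cofinite for all $i\ge 0$. Concretely, the proof is a single sentence: by Grothendieck vanishing $\cd(\fa)\le\dim R\le 1$ for all $\fa\in\Phi$, hence Theorem~\ref{T:cdle1} applies.

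I do not expect any genuine obstacle here; the corollary is a special case packaged for convenience. The only point worth a moment of care is making sure the dimension bound is applied to \emph{arbitrary} modules $X$ (as required by the definition of $\cd(\fa)$ used in the paper, namely $\cd(\fa)=\sup\{i\mid \lc^i_\fa(X)\ne 0\text{ for some }R\text{-module }X\}$), not merely to the finitely generated module $X$ in the statement; but since $\dim X\le\dim R$ for every $R$-module over a Noetherian ring of finite dimension, this causes no trouble. Thus the corollary follows at once.
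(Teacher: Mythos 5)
Your proof is correct and follows essentially the same route as the paper: verify via Grothendieck vanishing that $\dim(R)\le 1$ forces $\cd(\fa)\le 1$ for every $\fa\in\Phi$, then apply Theorem~\ref{T:cdle1}. Your remark about checking the vanishing for arbitrary (not just finitely generated) modules is a welcome bit of extra care that the paper's own one-line proof glosses over, but it does not change the argument.
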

\begin{proof}Under the conditions stated above, $\lc^i_\fa(X)=0$ for all $i>1$ and for all finitely generated $R$-module $X$  and $\fa\in \Phi.$ Hence $\mathrm{cd} (\fa)\le 1$ for all $\fa\in \Phi.$ The assertion follows from \ref{T:cdle1}.
\end{proof}


The following two theorems are needed in the proof of the next  main results of this section.

\begin{thm}\label{v1}
Let $\Phi$ be a system of ideals of $R$ and $\fa\in \Phi$. Let $M$  be a finitely generated  $R$-module, $X$ an arbitrary $R$-module, and $t$ a non-negative integer such that
\begin{itemize}
\item[(i)] $\Ext_{R}^{t-i}(\Tor_{i}^{R}(R/\fa , M) , X)$ is finitely generated for all $i \leq t$, and
\item[(ii)] $\Ext_{R}^{t+1-i}(R/\fa , \lc^{i}_{\Phi}(M , X))$ is finitely generated for all $i \leq t$.
\end{itemize}
Then  $\Hom_{R}(R/\fa , \lc^{t}_{\Phi}(M , X))$  is finitely generated.
\end{thm}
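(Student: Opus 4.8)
The plan is to run the Grothendieck spectral sequence that computes $\lc^\bullet_\Phi(M,X)$ from $\Ext$ and $\lc_\Phi$, but now in the presence of the module $R/\fa$, and to extract from it the finiteness of $\Hom_R(R/\fa,\lc^t_\Phi(M,X))$. Concretely, I would start from the two-variable composite-functor identity $\lc^i_\Phi(M,X)\cong \mathop{\lim}\limits_{\longrightarrow,\,\fb\in\Phi}\Ext^i_R(M/\fb M,X)$ together with the spectral sequence $E_2^{p,q}=\lc^p_\Phi(\Ext^q_R(M,X))\Rightarrow \lc^{p+q}_\Phi(M,X)$ already used in the proof of Theorem \ref{T:cdle1}. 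The point of hypothesis (i) is to control the $\Ext^q_R(M,X)$-page after tensoring/$\Hom$-ing with $R/\fa$: there is a standard change-of-rings spectral sequence relating $\Ext^{*}_R(R/\fa\otimes^{\mathbb L}M,X)$ to $\Ext^{*-i}_R(\Tor_i^R(R/\fa,M),X)$, and hypothesis (i) says exactly that the relevant entries $\Ext_R^{t-i}(\Tor_i^R(R/\fa,M),X)$ are finitely generated.

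The cleanest route, I expect, is to first reduce to the ordinary (one-variable) situation. Using the finite free resolution $\mathbf{F}_\bullet\to M$ with each $F_j$ finitely generated free, one has $\lc^i_\Phi(M,X)$ computed by $\lc^\bullet_\Phi$ applied to the complex $\Hom_R(\mathbf{F}_\bullet,E^\bullet)$ for an injective resolution $E^\bullet$ of $X$; since each $F_j$ is finitely generated free, $\Hom_R(F_j,E^\bullet)$ is again an injective resolution of $\Hom_R(F_j,X)=X^{\,\mathrm{rk}\,F_j}$, so the double complex has columns computing $\lc^\bullet_\Phi(X)^{\,\mathrm{rk}\,F_j}$. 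This is the standard derivation of the spectral sequence $E_2^{p,q}=\lc^p_\Phi(\Ext^q_R(M,X))\Rightarrow \lc^{p+q}_\Phi(M,X)$. What one needs in addition is that each $\lc^p_\Phi(\Ext^q_R(M,X))$ that contributes in total degree $\le t$ has the $\Ext^{t+1-(p+q)}_R(R/\fa,-)$-type groups finitely generated — and here I would feed in the known one-variable result \cite[2.15]{AB1} (or the inductive hypothesis of the present circle of results) applied to the finitely generated modules $\Ext^q_R(M,X)$ when those are finitely generated, and otherwise argue directly.

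Then the mechanics are the familiar filtration argument: $\lc^t_\Phi(M,X)$ carries a finite filtration $0=\phi^{t+1}\subseteq\phi^t\subseteq\cdots\subseteq\phi^0=\lc^t_\Phi(M,X)$ with $\phi^i/\phi^{i+1}\cong E_\infty^{i,t-i}$, each $E_\infty^{i,t-i}$ a subquotient of $E_2^{i,t-i}=\lc^i_\Phi(\Ext^{t-i}_R(M,X))$. Applying $\Hom_R(R/\fa,-)$ and using that $\Hom_R(R/\fa,-)$ is left exact, $\Hom_R(R/\fa,\lc^t_\Phi(M,X))$ is built from the groups $\Hom_R(R/\fa,E_\infty^{i,t-i})$ by successive extensions, and each such group embeds (via the long exact sequences coming from the two short exact sequences expressing $E_\infty$ as a subquotient of $E_2$) into groups of the form $\Ext^{j}_R(R/\fa,E_2^{i,t-i})$ with $j\le 1$; these are finitely generated once we invoke hypothesis (ii) (which says $\Ext^{t+1-i}_R(R/\fa,\lc^i_\Phi(M,X))$ is finitely generated for $i\le t$) together with hypothesis (i) via the change-of-rings argument above. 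Since the category of finitely generated $R$-modules is closed under subobjects (Noetherian) and extensions, one concludes $\Hom_R(R/\fa,\lc^t_\Phi(M,X))$ is finitely generated.

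The main obstacle I anticipate is bookkeeping the two spectral sequences simultaneously: the inner change-of-rings spectral sequence $\Ext^p_R(\Tor_q^R(R/\fa,M),X)\Rightarrow \Ext^{p+q}_R(R/\fa\otimes^{\mathbb L}M,X)$ and the outer $\lc^\bullet_\Phi$ spectral sequence, and matching the total-degree-$t$ contributions so that exactly the hypotheses (i) and (ii) as stated suffice — no more, no less. A secondary technical point is that $X$ is not assumed finitely generated, so one cannot invoke niceness of $\Ext^q_R(M,X)$ directly; the hypotheses are precisely designed to sidestep this, and the argument must route every finiteness claim through $\Hom$ or $\Ext^1$ of $R/\fa$ against modules that are either finitely generated by assumption or $\Phi$-cofinite by a lower case of the induction. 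Handling the edge terms of the filtration (the $q=0$ and $i=t$ corners) carefully, where $\lc^0_\Phi(\Ext^t_R(M,X))\subseteq \Ext^t_R(M,X)$ need not be finitely generated, will require using hypothesis (i) with $i=0$, i.e. finiteness of $\Ext^t_R(R/\fa,X)$, to control $\Hom_R(R/\fa,\lc^0_\Phi(\Ext^t_R(M,X)))$.
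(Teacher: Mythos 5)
There is a genuine gap: the spectral sequence you build the argument around is not the one the hypotheses are designed for, and as a result neither hypothesis actually feeds into your filtration argument. You propose to filter $\lc^{t}_{\Phi}(M,X)$ by the Grothendieck spectral sequence $E_2^{p,q}=\lc^p_{\Phi}(\Ext^q_R(M,X))\Rightarrow\lc^{p+q}_{\Phi}(M,X)$ and then bound $\Hom_R(R/\fa,-)$ and $\Ext^1_R(R/\fa,-)$ of the graded pieces $E_\infty^{i,t-i}$, which are subquotients of $\lc^{i}_{\Phi}(\Ext^{t-i}_R(M,X))$. But hypothesis (ii) gives finiteness of $\Ext^{t+1-i}_R(R/\fa,\lc^{i}_{\Phi}(M,X))$ --- a single Ext-degree $t+1-i\geq 1$ (never degree $0$) against $\lc^{i}_{\Phi}(M,X)$ itself, not against $\lc^{i}_{\Phi}(\Ext^{t-i}_R(M,X))$ --- and hypothesis (i) concerns $\Ext^{t-i}_R(\Tor_i^R(R/\fa,M),X)$. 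Since $X$ is arbitrary, $\Ext^q_R(M,X)$ need not be finitely generated, so you also cannot fall back on cofiniteness results for finitely generated coefficients to control $\lc^{i}_{\Phi}(\Ext^{t-i}_R(M,X))$. (A smaller slip: hypothesis (i) at $i=0$ gives finiteness of $\Ext^t_R(M/\fa M,X)$, not of $\Ext^t_R(R/\fa,X)$ as you write.)

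The missing idea is an auxiliary abutment linking the two hypotheses. The paper takes a Cartan--Eilenberg injective resolution $\mathrm{T}^{\bullet,\bullet}$ of the complex $\Hom_R(M,\Gamma_{\Phi}(E^{\bullet}))$ (with $E^{\bullet}$ an injective resolution of $X$), applies $\Hom_R(R/\fa,-)$, and computes $\mathrm{H}^{t}$ of the total complex by its \emph{two} spectral sequences. The first has $E_2$-page $\Ext^p_R(\Tor_q^R(R/\fa,M),X)$ (using that $\Tor_q^R(R/\fa,M)$ is $\fa$-torsion, so $\lc^p_{\Phi}$ of it with coefficients in $X$ is just $\Ext^p_R$); hypothesis (i) makes its degree-$t$ diagonal, hence the abutment $\mathrm{H}^t(\mathrm{Tot})$, finitely generated. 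The second has $E_2$-page $\Ext^p_R(R/\fa,\lc^{q}_{\Phi}(M,X))$ with the same abutment; finiteness of the abutment gives finiteness of $E_\infty^{0,t}$, and hypothesis (ii) supplies exactly the finiteness of the differential targets $E_2^{r,\,t+1-r}$ for $r\geq 2$ needed to climb back from $E_\infty^{0,t}=E_{t+2}^{0,t}$ to $E_2^{0,t}=\Hom_R(R/\fa,\lc^{t}_{\Phi}(M,X))$. You correctly spotted that the change-of-rings page $\Ext^p_R(\Tor_q^R(R/\fa,M),X)$ is where hypothesis (i) lives, but you never connect its abutment to $\Hom_R(R/\fa,\lc^{t}_{\Phi}(M,X))$; that connection, via the second spectral sequence of the same double complex rather than via the composite-functor spectral sequence, is the heart of the proof.
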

\begin{proof}
Let
\begin{center}
$\e^{\bullet} = 0 \rightarrow X \rightarrow \e^{0} \rightarrow ... \rightarrow \e^{i} \rightarrow ...$
\end{center}
be an injective resolution of $X$ and apply $\Hom_{R}(M , \Gamma_{\Phi}(-))$  to its deletion $\e^{\bullet X}$ to get the complex
\begin{center}
$\Hom_{R}(M , \Gamma_{\Phi}(\e^{\bullet  X})) =  0  \rightarrow \Hom_{R}(M , \Gamma_{\Phi}(\e^{0})) \rightarrow ... \rightarrow
\Hom_{R}(M , \Gamma_{\Phi}(\e^{i})) \rightarrow ...$
\end{center}

Let
\begin{center}
$0 \rightarrow \Hom_{R}(M , \Gamma_{\Phi}(\e^{\bullet  X})) \rightarrow \T^{\bullet , 0} \rightarrow \T^{\bullet , 1}  \rightarrow ...
 \rightarrow \T ^{\bullet , i} \rightarrow ...$
\end{center}
be a Cartan-Eilenberg injective resolution of $\Hom_{R}(M , \Gamma_{\Phi}(\e^{\bullet  X}))$, which exists by \cite[Theorem 10.45]{rotani} and
consider the third quadrant bicomplex $\mathcal T = \lbrace \Hom_{R}(R/\fa , \T^{p,q}) \rbrace$. We denoted the total complex of $\mathcal T$ by
Tot($\mathcal T$).\\

The first filtration has $^{I} \e_{2}$ term the iterated homology $\h'^{p} \h''^{p,q}(\mathcal T)$. We have
\begin{align*}
\h''^{p,q}(\mathcal T) & \cong \h^{q}(\Hom_{R}(R/\fa , \T^{p,\bullet}))\\
                       & \cong \Ext^{q}_{R}(R/\fa , \Hom_{R}(M , \Gamma_{\Phi}(\e^{p}))\\
                       & \cong \Hom_{R}(\Tor_{q}^{R}(R/\fa , M) , \Gamma_{\Phi}(\e^{p}))
\end{align*}
from \cite[Lemma 4.4]{AKS} and \cite[Theorem 10.63]{rotani} therefore
\begin{align*}
^{I}\e_{2}^{p,q} & \cong \h'^{p}\h''^{p,q}(\mathcal T)\\
                           & \cong \h^{p}(\Hom_{R}(\Tor_{q}^{R}(R/\fa , M) ,\Gamma_{\Phi}(\e^{\bullet X}) )\\
                           & \cong \lc_{\Phi}^{p}(\Tor_{q}^{R}(R/\fa , M) , X)
\end{align*}
which yields, by \cite[Lemma 2.8 (ii)]{hajcof}, the third quadrant spectral sequence
\begin{center}
$^{I} \e _{2}^{p,q} := \Ext^{p}_{R}(\Tor_{q}^{R}(R/\fa , M) , X)  \underset{p}\Rightarrow \h^{p+q}(\Tot(\mathcal T)).$
\end{center}
For all $i \leq t$, we have $^{I}\e_{\infty}^{t-i,i} = ^{I}\e_{t+2}^{t-i,i}$ because, for all $j \geq t+2, ^{I}\e_{j}^{t-i-j,i+j-1} =0=~
^{I}\e_{j}^{t-i+j,i+1-j}$; so that $^{I}\e_{\infty}^{t-i,i}$ is finitely generated from the fact that $^{I}\e_{t+2}^{t-i,i}$ is a subquotient of
$^{I}\e_{2}^{t-i,i}$ which is finitely generated by assumption (i). There exists a finite filtration
\begin{center}
$0 = \phi^{t+1} \h^{t} \subseteq \phi^{t} \h^{t} \subseteq ... \subseteq \phi^{1} \h^{t} \subseteq \phi^{0} \h^{t} =\h^{t}(\Tot(\mathcal T))$
\end{center}
such that $^{I}\e_{\infty}^{t-i,i} \cong \phi^{t-i} \h^{t} / \phi^{t-i+1} \h^{t}$ for all $i\leq t$. Now the exact sequences
\begin{center}
$0 \rightarrow \phi^{t-i+1} \h^{t} \rightarrow \phi^{t} \h^{t-i} \rightarrow~ ^{I}\e_{\infty}^{t-i,i} \rightarrow 0,$
\end{center}
for all $i \leq t$, show that $\h^{t}(\Tot (\mathcal T))$ is finitely generated.

On the other hand, the second filtration has $^{II}\e_{2}$ term the iterated homology $\h''^{p}\h^{'q,p}(\mathcal T)$. Note that every short
exact sequence of injective modules splits and so it remains split after applying the functor $\Hom_{R}(R/\fa , -)$. By using this fact and
the fact that $\T^{\bullet , \bullet}$ is a Cartan-Eilenberg injective resolution of $\Hom_{R}(M , \Gamma_{\Phi}(\e^{\bullet X}))$, we get
\begin{align*}
\h'^{q,p}(\mathcal T) & \cong \h^{q}(\Hom_{R}(R/\fa , \T^{\bullet , p}))\\
                       & \cong \Hom_{R}(R/\fa , \h^{q}(\T^{\bullet ,p}))\\
                       & \cong \Hom_{R}(R/\fa , \h^{q,p}).
\end{align*}

Therefore
\begin{align*}
^{II}\e_{2}^{p,q} & \cong \h''^{p}\h'^{q,p}(\mathcal T)\\
                   & \cong \h^{p}(\Hom_{R}(R/\fa , \h^{q, \bullet}))\\
                   & \cong \Ext_{R}^{p}(R/\fa , \lc^{q}_{\Phi}(M , X))
\end{align*}
which gives the third quadrant spectral sequence
\begin{center}
$^{II}\e_{2}^{p,q} :=\Ext_{R}^{p}(R/ \fa , \lc^{q}_{\Phi}(M , X)) \underset{p}\Rightarrow  \h^{p+q}(\Tot(\mathcal T)). $
\end{center}

There exists a finite filtration
\begin{center}
$0 = \psi^{t+1} \h^{t} \subseteq \psi^{t} \h^{t} \subseteq ... \subseteq \psi^{1} \h^{t} \subseteq \psi^{0} \h^{t} =\h^{t}(\Tot(\mathcal T))$
\end{center}
such that $^{II}\e_{\infty}^{t-i,i} \cong \psi^{t-i} \h^{t} / \psi^{t-i+1} \h^{t}$ for all $i\leq t$. Since $\h^{t}(\Tot(\mathcal T))$ is finitely generated,
$^{II}\e_{\infty}^{0,t} \cong \psi^{0} \h^{t}/\psi^{1} \h^{t}$ is finitely generated. Therefore $^{II}\e_{t+2}^{0,t}$ is finitely generated because
$^{II}\e_{t+2}^{0,t} = ^{II}\e_{\infty}^{0,t}$ from the fact that $^{II}\e_{j}^{-j,t+j-1} = 0 = ^{II}\e_{j}^{j,t+1-j}$ for all
$j \geq t+2$. For all $r \geq 2$, let $^{II}Z^{0,t}_{r} = \Ker (^{II}\e_{r}^{0,t} \rightarrow ^{II}\e_{r}^{r,t+1-r})$ and
$^{II}B^{0,t}_{r} = \Image (^{II}\e_{r}^{-r,t+r-1} \rightarrow ^{II}\e_{r}^{0,t})$.
We have the exact sequences
\begin{center}
$0 \rightarrow ^{II}Z^{0,t}_{r} \rightarrow ^{II}\e^{0,t}_{r} \rightarrow ^{II}\e^{0,t}_{r} /  ^{II}Z^{0,t}_{r} \rightarrow 0$
\end{center}
and
\begin{center}
$0   \rightarrow ^{II}B^{0,t}_{r} \rightarrow  \rightarrow ^{II}Z^{0,t}_{r} \rightarrow ^{II}\e^{0,t}_{r+1} \rightarrow 0$.
\end{center}
Since $^{II}\e^{r,t+1-r}_{2}$ is finitely generated and $^{II}\e^{-r,t+r-1}_{2} = 0, ^{II}\e^{r,t+1-r}_{r}$ is finitely generated and $^{II}\e^{-r,t+r-1}_{r} = 0$ and so $^{II}\e^{0,t}_{r} /  ^{II}Z^{0,t}_{r}$ is finitely generated and $^{II}B^{0,t}_{r} = 0$. Hence $^{II}\e^{0,t}_{r}$ is finitely generated when
$^{II}\e^{0,t}_{r}$ is finitely generated. Therefore $^{II}\e^{0,t}_{2} = \Hom_{R}(R/\fa , \lc^{t}_{\Phi}(M , X))$ is finitely generated.
\end{proof}

\begin{thm} \label{v2}
Let $\Phi$ be a system of ideals of $R$ and $\fa\in \Phi$. Let $M$ be a finitely generated $R$-module, $X$ an arbitrary $R$-module, and $t$ a non-negative integer such that
\begin{itemize}
\item[(i)] $\Ext_{R}^{t+1-i}(\Tor_{i}^{R}(R/\fa , M), X)$ is finitely generated for all $i \leq t+1$, and
\item[(ii)] $\Ext_{R}^{t+2-i}(R/\fa , \lc_{\Phi}^{i}(M,X)) $is finitely generated for all $i < t$.
\end{itemize}
Then $\Ext_{R}^{1}(R/\fa , \lc_{\Phi}^{t}(M,X)) $is finitely generated.
\end{thm}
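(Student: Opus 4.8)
The plan is to follow the proof of Theorem~\ref{v1} essentially verbatim, re-using the same bicomplex and the same two spectral sequences but reading off a different entry of the second one. So I would take an injective resolution $\e^{\bullet}$ of $X$, apply $\Hom_R(M,\Gamma_\Phi(-))$ to its deletion $\e^{\bullet X}$, pick a Cartan--Eilenberg injective resolution $\T^{\bullet,\bullet}$ of $\Hom_R(M,\Gamma_\Phi(\e^{\bullet X}))$, form the third-quadrant bicomplex $\mathcal T=\{\Hom_R(R/\fa,\T^{p,q})\}$, and exactly as in the proof of Theorem~\ref{v1} --- using \cite[Lemma 4.4]{AKS}, \cite[Theorem 10.63]{rotani}, \cite[Lemma 2.8 (ii)]{hajcof}, and the fact that $\Tor_q^R(R/\fa,M)$ is annihilated by $\fa$ --- obtain the two spectral sequences
\begin{align*}
{}^{I}\e_2^{p,q}&:=\Ext_R^{p}(\Tor_q^R(R/\fa,M),X)\ \underset{p}\Rightarrow\ \h^{p+q}(\Tot(\mathcal T)),\\
{}^{II}\e_2^{p,q}&:=\Ext_R^{p}(R/\fa,\lc_\Phi^{q}(M,X))\ \underset{p}\Rightarrow\ \h^{p+q}(\Tot(\mathcal T)).
\end{align*}

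The first step is to deduce from the first spectral sequence and hypothesis~(i) that $\h^{t+1}(\Tot(\mathcal T))$ is finitely generated. Indeed, the entries on the antidiagonal $p+q=t+1$ are ${}^{I}\e_2^{t+1-i,i}=\Ext_R^{t+1-i}(\Tor_i^R(R/\fa,M),X)$ for $0\le i\le t+1$, which are finitely generated by~(i); each ${}^{I}\e_\infty^{t+1-i,i}$ is a subquotient of the corresponding ${}^{I}\e_2$-term (the spectral sequence stabilizing on this antidiagonal after finitely many pages), and $\h^{t+1}(\Tot(\mathcal T))$ admits a finite filtration with these as its graded pieces.

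The second step is a descending induction in the second spectral sequence, from $\h^{t+1}(\Tot(\mathcal T))$ back to ${}^{II}\e_2^{1,t}=\Ext_R^{1}(R/\fa,\lc_\Phi^{t}(M,X))$. Since ${}^{II}\e_\infty^{1,t}$ is a graded piece of the finitely generated module $\h^{t+1}(\Tot(\mathcal T))$, it is finitely generated, and ${}^{II}\e_\infty^{1,t}={}^{II}\e_{t+2}^{1,t}$ because ${}^{II}\e_j^{1+j,\,t+1-j}=0={}^{II}\e_j^{1-j,\,t+j-1}$ for all $j\ge t+2$. For each $r\ge 2$ the differential entering ${}^{II}\e_r^{1,t}$ starts at ${}^{II}\e_r^{1-r,t+r-1}=0$, so there is a short exact sequence
$$0\longrightarrow{}^{II}\e_{r+1}^{1,t}\longrightarrow{}^{II}\e_r^{1,t}\longrightarrow\Image\bigl(d_r\colon{}^{II}\e_r^{1,t}\to{}^{II}\e_r^{1+r,\,t+1-r}\bigr)\longrightarrow0,$$
whose right-hand term is a submodule of a subquotient of ${}^{II}\e_2^{1+r,\,t+1-r}=\Ext_R^{r+1}(R/\fa,\lc_\Phi^{t+1-r}(M,X))$, which is finitely generated by hypothesis~(ii) (applied with $i=t+1-r$, and $i<t$ since $r\ge 2$). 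Running $r$ from $t+2$ down to $2$, each ${}^{II}\e_r^{1,t}$ is then an extension of finitely generated modules, hence finitely generated; in particular ${}^{II}\e_2^{1,t}=\Ext_R^{1}(R/\fa,\lc_\Phi^{t}(M,X))$ is finitely generated, which is the assertion.

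All the structural material in the first paragraph --- existence of the Cartan--Eilenberg resolution, the identification of the two $\e_2$-pages, and the fact that a short exact sequence of injectives stays split after applying $\Hom_R(R/\fa,-)$ --- is copied from the proof of Theorem~\ref{v1}, so no real work is needed there. The genuinely new part is the bookkeeping above, and the one point I would be careful about (and expect to be the only subtle step) is the index matching: one must check that hypothesis~(i) governs precisely the antidiagonal $p+q=t+1$ of the first spectral sequence and that the differential targets ${}^{II}\e_r^{1+r,t+1-r}$ in the descending induction always land in the range $i=t+1-r\le t-1$ covered by hypothesis~(ii).
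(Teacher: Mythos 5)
Your proposal is correct and is exactly the adaptation of the proof of Theorem \ref{v1} that the paper leaves to the reader: hypothesis (i) controls the antidiagonal $p+q=t+1$ of the first spectral sequence, giving that $\h^{t+1}(\Tot(\mathcal T))$ is finitely generated, and the descending induction through the exact sequences $0\to{}^{II}\e_{r+1}^{1,t}\to{}^{II}\e_{r}^{1,t}\to\Image(d_r)\to 0$ with targets ${}^{II}\e_2^{r+1,t+1-r}=\Ext_R^{r+1}(R/\fa,\lc_\Phi^{t+1-r}(M,X))$ matches hypothesis (ii) precisely (take $i=t+1-r<t$ for $r\ge 2$). The index bookkeeping you flag as the only delicate point does check out, so nothing further is needed.
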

\begin{proof}
This is sufficiently similar to that of Theorem \ref{v1} to be omitted. We leave the proof to the
reader.
\end{proof}


\begin{lem}\label{equ}
	Let $R$ be a Noetherian ring, $\fa$ an ideal of $R$, $t$ a non-negative integer. Then for any $R$--modules
	$T$, the following conditions are equivalent:
	\begin{itemize}
		\item[(i)]  $\Ext^i_{R}(R/\fa, T)$ is finitely generated for all $0 \leq i\leq t$.
		\item[(ii)] for any finitely generated $R$--module $N$ with $\Supp_R(N)\subseteq \V(\fa)$,
		$\Ext^i_{R}(N, T)$ is finitely generated for all $0 \leq i\leq t$.
	\end{itemize}
\end{lem}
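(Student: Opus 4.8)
The plan is to prove the two implications separately, the direction $(ii)\Rightarrow(i)$ being immediate since $N=R/\fa$ is a finitely generated module with $\Supp_R(R/\fa)=\V(\fa)$. For $(i)\Rightarrow(ii)$ I would argue by induction on $t$. Fix a finitely generated $R$-module $N$ with $\Supp_R(N)\subseteq\V(\fa)$; since $N$ is finitely generated and $R$ is Noetherian this forces $\fa^{s}N=0$ for some $s\ge1$, so $N$ carries the $\fa$-adic filtration $N\supseteq\fa N\supseteq\cdots\supseteq\fa^{s}N=0$ whose successive quotients $\fa^{j}N/\fa^{j+1}N$ are finitely generated modules over the Noetherian ring $\bar R=R/\fa$, in particular finitely generated $R$-modules annihilated by $\fa$. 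Applying the long exact sequence of $\Ext_R(-,T)$ to the short exact sequences $0\to\fa^{j+1}N\to\fa^{j}N\to\fa^{j}N/\fa^{j+1}N\to0$ and descending from $j=s-1$ to $j=0$, together with the two-out-of-three property (if the two outer terms of $\Ext^{i}(C,T)\to\Ext^{i}(B,T)\to\Ext^{i}(A,T)$ coming from $0\to A\to B\to C\to0$ are finitely generated, so is the middle one, using that submodules and quotients of finitely generated modules over a Noetherian ring are finitely generated), it suffices to prove the statement for finitely generated modules killed by $\fa$.

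So assume $\fa N=0$. Because $\bar R$ is Noetherian, $N$ admits a short exact sequence of $\bar R$-modules $0\to N_{1}\to F\to N\to0$ with $F\cong\bar R^{\,r}$ finitely generated free and $N_{1}$ finitely generated over $\bar R$; in particular $N_{1}$ is a finitely generated $R$-module with $\Supp_R(N_{1})\subseteq\V(\fa)$. Apply $\Hom_R(-,T)$: in degree $0$ this gives $0\to\Hom_R(N,T)\to\Hom_R(F,T)\cong\Hom_R(R/\fa,T)^{\,r}$, whose target is finitely generated by $(i)$, so $\Hom_R(N,T)$ is finitely generated; this already settles the base case $t=0$. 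For $t\ge1$, the long exact sequence yields, for each $1\le i\le t$, an exact piece
\[
\Ext^{i-1}_R(N_{1},T)\longrightarrow\Ext^{i}_R(N,T)\longrightarrow\Ext^{i}_R(F,T)\cong\Ext^{i}_R(R/\fa,T)^{\,r},
\]
where the right-hand term is finitely generated by $(i)$ (as $i\le t$) and the left-hand term is finitely generated by the induction hypothesis applied to $t-1$ (note $(i)$ for $t$ trivially implies $(i)$ for $t-1$, and $i-1\le t-1$). Hence $\Ext^{i}_R(N,T)$ is finitely generated for all $0\le i\le t$, completing the induction.

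The argument is essentially a dimension shift, so I do not expect any single step to be a genuine obstacle; the points that need care are the bookkeeping of the two layers of induction — first reducing to $\fa N=0$ via the $\fa$-adic filtration, then inducting on $t$ by peeling off one syzygy over $R/\fa$ — and the repeated use of the two-out-of-three property for finite generation along long exact sequences. I would also state explicitly at the outset the elementary facts that "finitely generated with support in $\V(\fa)$" is equivalent to "finitely generated and annihilated by a power of $\fa$" (this is where Noetherianness of $R$ is used) and that a finitely generated $R$-module annihilated by $\fa$ is precisely a finitely generated $R/\fa$-module, so it admits a finite free presentation over $R/\fa$ with finitely generated kernel.
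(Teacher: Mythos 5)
Your proof is correct: both the reduction to $\fa N=0$ via the $\fa$-adic filtration and the subsequent dimension shift along a finite free presentation over $R/\fa$ are sound, and the two-out-of-three bookkeeping for finite generation works as you describe. The paper itself gives no argument here but simply cites Kawasaki \cite[Lemma 1]{Ka}, whose proof is essentially this same standard filtration-and-syzygy argument, so you have in effect supplied the omitted proof rather than deviated from it.
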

\begin{proof}
	See \cite[Lemma 1]{Ka}.
\end{proof}



As a generalization of definition of cofinite modules with respect to an ideal (\cite[Definition
2.2]{Mel}), it is introduced in \cite[Definition 2.2]{A} the following definition.

\begin{defn}\label{def}
	An  $R$-module $M$ {\rm{(}}not necessary $\fa$-torsion{\rm{)}} is called
	$ETH$-cofinite with respect to ideal $\fa$ of $R$ or $\fa$-$ETH$-cofinite if
	$\Ext^{i}_{R}(R/\fa,M)$ is a finitely generated  $R$-module for all $i$.
\end{defn}

For more details about properties of this class see \cite[Example 2.3]{A}.


\begin{defn}
An $R$-module
$M$ is said to be ${\rm FD_{\leq n}}$ module, if there exists a finitely generated
submodule  $N$ of $M$ such that $\dim M/N\leq n$.	
\end{defn}

The following remark is some elementary properties of the class of ${\rm FD_{\leq n}}$ modules  which we shall use.

\begin{rem}\label{R:tinhchatminimax}
	There are some basic properties of the class of ${\rm FD_{\leq n}}$ modules:
	\begin{itemize}
		\item [(i)] The class of ${\rm FD_{\leq n}}$ modules contains all finitely generated, Artinian, minimax and weakly Laskerian modules.
		\item[(ii)] Let $0 \to L \to M \to N \to 0$ be a short exact sequence of $R$-modules. Then $M$ is a ${\rm FD_{\leq n}}$ module if and only
		if $L$ and $N$ are both ${\rm FD_{\leq n}}$ module. Thus, any submodule and quotient of a ${\rm FD_{\leq n}}$ module is ${\rm FD_{\leq n}}$.
		\item[(iii)] If $M$ is a finitely generated $R$-module and $L$ is a ${\rm FD_{\leq n}}$ $R$-module, then
		$\Ext^j_R(M, L)$ and $\Tor^R_j (M, L)$ are ${\rm FD_{\leq n}}$ for all $j \ge 0.$
	\end{itemize}
\end{rem}


In what follows, the next lemma plays key role.


\begin{lem} \label {wl}
	Let $R$ be a Noetherian ring and $\fa$ an ideal of $R$. Let $M$ be an ${\rm
		FD_{\leq 1}}$  $R$-module. Then $M$ is $\fa$-$ETH$-cofinite if and only if
	$\Hom_R(R/\fa,M)$ and  $\Ext^{1}_R(R/\fa,M)$ are finitely generated.
\end{lem}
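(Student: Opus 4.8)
The plan is to prove the non-trivial direction: assuming $M$ is ${\rm FD}_{\leq 1}$ with $\Hom_R(R/\fa,M)$ and $\Ext^1_R(R/\fa,M)$ finitely generated, deduce that $\Ext^i_R(R/\fa,M)$ is finitely generated for \emph{all} $i$. (The converse is immediate since $\Hom$ and $\Ext^1$ are special cases.) First I would use the definition of ${\rm FD}_{\leq 1}$ to choose a finitely generated submodule $N\subseteq M$ with $\dim M/N\leq 1$, giving a short exact sequence $0\to N\to M\to M/N\to 0$. Applying $\Hom_R(R/\fa,-)$ and the long exact sequence, together with the fact that $N$ finitely generated forces $\Ext^i_R(R/\fa,N)$ finitely generated for all $i$, I can transfer the hypothesis: $\Hom_R(R/\fa,M/N)$ and $\Ext^1_R(R/\fa,M/N)$ are finitely generated, and it suffices to show $M/N$ is $\fa$-$ETH$-cofinite. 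So without loss of generality we may assume $M$ itself is ${\rm FD}_{\leq 1}$ with $\dim M\leq 1$, i.e.\ $M$ is finitely generated-modulo-something small; more precisely one can further reduce, using Remark \ref{R:tinhchatminimax}(ii), to the case $M$ is an ${\rm FD}_{\leq 1}$ module which is ``small'' in the sense that its finitely generated part can be split off.

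Next, the key structural move: after the reduction, consider the submodule $\Gamma_\fa(M)$ of $\fa$-torsion elements and the exact sequence $0\to\Gamma_\fa(M)\to M\to M/\Gamma_\fa(M)\to 0$. Both pieces are still ${\rm FD}_{\leq 1}$ by Remark \ref{R:tinhchatminimax}(ii), and $\Hom,\Ext^1$ of $R/\fa$ into each are finitely generated (using Lemma \ref{equ} to pass between $R/\fa$ and arbitrary finitely generated modules supported on $\V(\fa)$). For the torsion part $T:=\Gamma_\fa(M)$: since $T$ is ${\rm FD}_{\leq 1}$, write $0\to T'\to T\to T''\to 0$ with $T'$ finitely generated and $\dim T''\leq 1$; the module $T''$ is $\fa$-torsion of dimension $\leq 1$, and here one invokes the standard fact (originating with Hartshorne/Kawasaki/Mel\-kersson, cf.\ \cite{Mel}) that an $\fa$-torsion module of dimension $\leq 1$ is $\fa$-cofinite precisely when $\Hom_R(R/\fa,-)$ is finitely generated — actually one needs the ${\rm FD}_{\leq 1}$ refinement, which is exactly the content of \cite[Definition 2.1]{AB} and the surrounding results. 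For the torsion-free part $M/\Gamma_\fa(M)$, one shows $\fa$ contains a non-zerodivisor on it, and then an induction using multiplication by such an element $x$ via $0\to M/\Gamma_\fa(M)\xrightarrow{x} M/\Gamma_\fa(M)\to M/(\Gamma_\fa(M)+xM)\to 0$ reduces the dimension and lets us run the argument by induction on $\dim$; the base case $\dim\leq 0$ is where an ${\rm FD}_{\leq 0}$ module supported on $\V(\fa)$ with finitely generated $\Hom$ is directly seen to be cofinite.

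I would then assemble these: finitely generated pieces contribute finitely generated $\Ext$'s in all degrees automatically; the dimension-$\leq 1$ torsion piece is $\fa$-cofinite by the cited cofiniteness criterion for dimension-one torsion modules; and the long exact sequences splice everything back so that $\Ext^i_R(R/\fa,M)$ is finitely generated for every $i\geq 0$, i.e.\ $M$ is $\fa$-$ETH$-cofinite.

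The hard part, and the place where care is needed, is the reduction to the $\fa$-torsion case and the handling of the non-$\fa$-torsion summand: one must verify that $\fa$ genuinely acts with a non-zerodivisor on $M/\Gamma_\fa(M)$ (this uses prime avoidance and that the associated primes of $M/\Gamma_\fa(M)$ avoid $\fa$, which holds after the ${\rm FD}$-reduction since the finitely generated ``defect'' can be absorbed), and that killing by $x$ keeps us inside the class of ${\rm FD}_{\leq 1}$ modules with the $\Hom/\Ext^1$ hypothesis intact — checking the latter requires another round of long exact sequences. Once dimension drops to $0$ the finiteness is elementary, so the whole proof is an induction on $\dim(M/N)$ with the dimension-one torsion cofiniteness theorem as the engine, everything else being bookkeeping with the exact sequences above and Lemma \ref{equ}.
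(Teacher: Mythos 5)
The paper does not actually prove this lemma; it is quoted verbatim from \cite[Theorem 3.1]{AB1}, so there is no in-paper argument to compare against. Your sketch reconstructs the standard route from the literature (reduce modulo a finitely generated submodule, split off the $\fa$-torsion part, apply the dimension-one cofiniteness criterion to it, and handle the torsion-free remainder by a regular element and induction on dimension), and most of the skeleton is sound: the reduction to $\dim M\leq 1$, the fact that $\Hom_R(R/\fa,\Gamma_\fa(M))=\Hom_R(R/\fa,M)$ and $\Ext^1_R(R/\fa,\Gamma_\fa(M))\hookrightarrow\Ext^1_R(R/\fa,M)$ (via $\Hom_R(R/\fa,M/\Gamma_\fa(M))=0$), the use of the Bahmanpour--Naghipour/\cite{AB} criterion on the torsion part, and the observation that $x\in\fa$ kills $\Ext^i_R(R/\fa,-)$ so the long exact sequence for $0\to W\xrightarrow{x}W\to W/xW\to 0$ splits into short exact sequences that drop the dimension.

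The genuine gap is the step ``one shows $\fa$ contains a non-zerodivisor on $M/\Gamma_\fa(M)$ \ldots this uses prime avoidance.'' Prime avoidance is not available here: $W=M/\Gamma_\fa(M)$ is in general not finitely generated, $\Ass_R(W)$ may be infinite, and an ideal can be contained in an infinite union of primes none of which contains it (already for $\fa=\fm$ in a two-dimensional regular local ring, $\fm$ lies in the union of the infinitely many height-one primes). Being ${\rm FD_{\leq 1}}$ does not rescue this, since the quotient by the finitely generated part can still have infinitely many associated primes of coheight one. The existence of an $M/\Gamma_\fa(M)$-regular element of $\fa$ is true here, but it must be extracted from the \emph{hypotheses} --- namely from $\Hom_R(R/\fa,W)=0$ together with the finite generation of $\Ext^1_R(R/\fa,W)$ --- via the lemma of Melkersson \cite{Mel} (see also \cite{BN2}) asserting exactly this; it is not a formal consequence of torsion-freeness. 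Relatedly, your claim that $\Ext^1_R(R/\fa,M/\Gamma_\fa(M))$ is finitely generated ``using Lemma \ref{equ}'' is misattributed and slightly circular as stated: one must first establish cofiniteness of $\Gamma_\fa(M)$ (to control $\Ext^2_R(R/\fa,\Gamma_\fa(M))$) and only then read off finiteness for the quotient from the long exact sequence. With the Melkersson-type lemma substituted for prime avoidance and the order of these deductions fixed, the argument closes up; as written, the torsion-free case is not proved.
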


\begin{proof}
	See \cite[Theorem 3.1]{AB1}.	
\end{proof}

	
	

The following proposition which is one of our main results of this section is a generalization of Theorems \cite[Theorem 2.1]{DY} and \cite[Theorem A]{DY1}.

\begin{prop}\label{he}
	Let $R$ be a Noetherian ring, $\fa\in \Phi$ an ideal of $R$ and $M$ a finitely generated $R$-module.  Let $t\in\mathbb{N}_0$ be an integer and $X$ an $R$-module such that the $R$-modules $\Ext^i_R(R/\fa,X)$ are finitely generated for all $i\leq t+1$ and the $R$-module $\lc^i_\Phi(M,X)$ is $\Phi$-cofinite for all
	$i<t$. Then the $R$-modules ${\rm
		Hom}_R(R/\fa,\lc^{t}_\Phi(M,X))$ and ${\rm Ext}^{1}_R(R/\fa,\lc^{t}_\Phi(M,X))$ are finitely generated.
\end{prop}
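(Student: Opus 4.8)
The plan is to deduce Proposition \ref{he} from Theorems \ref{v1} and \ref{v2} by checking that, under the present hypotheses, the input conditions (i) and (ii) of each of those two theorems are satisfied. The key point is that ``$\Ext^i_R(R/\fa,X)$ finitely generated for $i\le t+1$'' forces $\Ext^{t-i}_R(\Tor_i^R(R/\fa,M),X)$ and $\Ext^{t+1-i}_R(\Tor_i^R(R/\fa,M),X)$ to be finitely generated for all relevant $i$, and ``$\lc^i_\Phi(M,X)$ is $\Phi$-cofinite for $i<t$'' feeds directly into the $\Ext^{t+1-i}_R(R/\fa,\lc^i_\Phi(M,X))$ / $\Ext^{t+2-i}_R(R/\fa,\lc^i_\Phi(M,X))$ hypotheses.

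First I would handle the $\Tor$–$\Ext$ terms. Since $M$ is finitely generated over the Noetherian ring $R$, each $\Tor_i^R(R/\fa,M)$ is a finitely generated $R$-module whose support lies in $\V(\fa)$. By Lemma \ref{equ}, the hypothesis that $\Ext^j_R(R/\fa,X)$ is finitely generated for all $0\le j\le t+1$ is equivalent to $\Ext^j_R(N,X)$ being finitely generated for all such $j$ and all finitely generated $N$ with $\Supp_R(N)\subseteq\V(\fa)$. Applying this with $N=\Tor_i^R(R/\fa,M)$ and noting that $t-i\le t+1$ and $t+1-i\le t+1$ for $i\ge 0$, I get condition (i) of Theorem \ref{v1} (with the given $t$) and condition (i) of Theorem \ref{v2} (with the given $t$). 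For the local cohomology terms, $\Phi$-cofiniteness of $\lc^i_\Phi(M,X)$ for $i<t$ means there is an ideal in $\Phi$ — and, after passing to a common refinement inside the system $\Phi$ using the defining property $\fc\subseteq\fa\fb$, I may take it to be $\fa$ itself — such that $\Ext^p_R(R/\fa,\lc^i_\Phi(M,X))$ is finitely generated for all $p$; in particular $\Ext^{t+1-i}_R(R/\fa,\lc^i_\Phi(M,X))$ (needed in Theorem \ref{v1}(ii), for $i\le t$, where the $i=t$ case $\Ext^1_R(R/\fa,\lc^t_\Phi(M,X))$ is exactly what we are about to prove and so must be treated carefully) and $\Ext^{t+2-i}_R(R/\fa,\lc^i_\Phi(M,X))$ (needed in Theorem \ref{v2}(ii), for $i<t$) are finitely generated.

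Then the argument concludes in two steps. Applying Theorem \ref{v1} gives that $\Hom_R(R/\fa,\lc^t_\Phi(M,X))$ is finitely generated; here one must be slightly careful, because Theorem \ref{v1}(ii) asks for $\Ext^{t+1-i}_R(R/\fa,\lc^i_\Phi(M,X))$ finitely generated for \emph{all} $i\le t$, including $i=t$, i.e. $\Ext^1_R(R/\fa,\lc^t_\Phi(M,X))$ — so strictly I would invoke Theorem \ref{v1} and Theorem \ref{v2} in the right order or simultaneously. The clean way is: first apply Theorem \ref{v2} with the given $t$, whose hypotheses only involve $\lc^i_\Phi(M,X)$ for $i<t$, to conclude $\Ext^1_R(R/\fa,\lc^t_\Phi(M,X))$ is finitely generated; then feed this (together with the $\Phi$-cofiniteness of $\lc^i_\Phi(M,X)$ for $i<t$) into Theorem \ref{v1} to conclude $\Hom_R(R/\fa,\lc^t_\Phi(M,X))$ is finitely generated. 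The main obstacle — really a bookkeeping obstacle rather than a conceptual one — is exactly this interleaving of the two spectral-sequence theorems and making sure that the index ranges $t-i$, $t+1-i$, $t+2-i$ all stay within the range $\le t+1$ where the $\Ext$'s against $R/\fa$ (hence, via Lemma \ref{equ}, against any finitely generated module supported on $\V(\fa)$) are known to be finitely generated, so that one never circularly uses the conclusion as a hypothesis.
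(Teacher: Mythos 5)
Your proposal is correct and follows exactly the paper's route: the published proof is the one-line observation that Lemma \ref{equ} verifies conditions (i) and (ii) of Theorems \ref{v1} and \ref{v2}, and your careful ordering (apply Theorem \ref{v2} first, then feed its conclusion into condition (ii) of Theorem \ref{v1} for the case $i=t$) is precisely the bookkeeping the paper leaves implicit and is the right way to avoid circularity. The only shaky spot is your parenthetical claim that the witness ideal in the definition of $\Phi$-cofiniteness can be replaced by $\fa$ via the system property $\fc\subseteq\fa\fb$ --- that inclusion goes the wrong way for an appeal to Lemma \ref{equ} --- but the paper makes the same tacit identification (and in its actual applications, e.g.\ Theorem \ref{homext1}, the modules are $\fa$-$ETH$-cofinite, so the witness is $\fa$ itself), so this is a defect of the statement rather than of your argument.
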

\proof
Using Lemma \ref{equ}, it is easy to see that the conditions (i) and (ii) of Theorems \ref{v1} and \ref{v2} are established. Thus the $R$-modules ${\rm
	Hom}_R(R/\fa,\lc^{t}_\Phi(M,X))$ and ${\rm Ext}^{1}_R(R/\fa,\lc^{t}_\Phi(M,X))$ are finitely generated by Theorems \ref{v1} and \ref{v2}.
\qed\\

We are now ready to state and prove the following main results of this section (Theorem
\ref{homext1} and   the Corollaries \ref{cof} and \ref{wl1}) which are extension of
Bahmanpour-Naghipour's results in \cite{BN}, \cite{BN2}, Quy's result in  \cite{hung}, \cite[Theorem 2.5]{C}, \cite[Theorem 3.6]{BSY1}  and \cite[Theorem 3.4 and Corollaries 3.5 and 3.6]{AB} and  \cite[Theorem 1.2]{CGH}.

\begin{thm}
	\label{homext1} Let $R$ be a Noetherian ring, $\fa\in \Phi$ an ideal of $R$ and $M$ a finitely generated $R$-module.  Let $t\in\mathbb{N}_0$ be an integer and $X$ an $R$-module such that the $R$-modules $\Ext^i_R(R/\fa,X)$ are finitely generated for all $i\leq t+1$ and the $R$-modules $\lc^i_\Phi(M,X)$ are ${\rm FD_{\leq1}}$ ~ $R$-modules for all $i<t$.
	Then, the following conditions hold:
	
	{\rm(i)} The $R$-modules $\lc^i_\Phi(M,X)$ are $\fa$-$ETH$-cofinite {\rm{(}}in particular $\Phi$-cofinite{\rm{)}} for all
	$i<t$.
	
	{\rm(ii)} For all ${\rm FD_{\leq 0}}$ {\rm{(}}or minimax{\rm{)}} submodule $N$ of
	$\lc^{t}_\Phi(M,X)$, the $R$-modules $${\Hom}_R(R/\fa,\lc^{t}_\Phi(M,X)/N)\,\,\, {\rm and}\,\,\,{\rm Ext}^1_R(R/\fa,\lc^{t}_\Phi(M,X)/N)$$ are finitely generated.
\end{thm}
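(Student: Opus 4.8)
The plan is to prove both parts by induction on $t$, using Proposition \ref{he} together with Lemma \ref{wl} as the engine that converts finiteness of $\Hom$ and $\Ext^1$ into $\fa$-$ETH$-cofiniteness for ${\rm FD_{\leq 1}}$ modules. For the base case $t=0$, part (i) is vacuous and part (ii) follows directly: since $\lc^0_\Phi(M,X)$ is ${\rm FD_{\leq 1}}$ and $N$ is an ${\rm FD_{\leq 0}}$ (or minimax) submodule, the quotient $\lc^0_\Phi(M,X)/N$ is still ${\rm FD_{\leq 1}}$ by Remark \ref{R:tinhchatminimax}(ii); applying Proposition \ref{he} with $t=0$ (the hypothesis on $\lc^i_\Phi(M,X)$ for $i<0$ being vacuous) shows $\Hom_R(R/\fa,\lc^0_\Phi(M,X))$ and $\Ext^1_R(R/\fa,\lc^0_\Phi(M,X))$ are finitely generated, and then one passes to the quotient by $N$ via the long exact sequence of $\Ext_R(R/\fa,-)$ applied to $0\to N\to \lc^0_\Phi(M,X)\to \lc^0_\Phi(M,X)/N\to 0$, using that $N$, being ${\rm FD_{\leq 0}}$, is $\fa$-$ETH$-cofinite (this last fact itself follows from Lemma \ref{wl} once one checks $\Hom$ and $\Ext^1$ are finite for ${\rm FD_{\leq 0}}$ modules, which is standard).

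For the inductive step, assume the statement holds for all integers smaller than $t\ge 1$. First I would establish part (i): for each $i<t$, the module $\lc^i_\Phi(M,X)$ is ${\rm FD_{\leq 1}}$ by hypothesis, and applying the inductive hypothesis (part (i)) to the integer $i$ shows $\lc^j_\Phi(M,X)$ is $\Phi$-cofinite for all $j<i$; hence by Proposition \ref{he} with $t$ replaced by $i$, both $\Hom_R(R/\fa,\lc^i_\Phi(M,X))$ and $\Ext^1_R(R/\fa,\lc^i_\Phi(M,X))$ are finitely generated, so Lemma \ref{wl} gives that $\lc^i_\Phi(M,X)$ is $\fa$-$ETH$-cofinite. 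In particular each $\lc^i_\Phi(M,X)$ with $i<t$ is $\Phi$-cofinite, which is exactly the hypothesis needed to invoke Proposition \ref{he} at level $t$: this yields that $\Hom_R(R/\fa,\lc^t_\Phi(M,X))$ and $\Ext^1_R(R/\fa,\lc^t_\Phi(M,X))$ are finitely generated.

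For part (ii), let $N$ be an ${\rm FD_{\leq 0}}$ (or minimax) submodule of $\lc^t_\Phi(M,X)$ and consider the short exact sequence $0\to N\to \lc^t_\Phi(M,X)\to \lc^t_\Phi(M,X)/N\to 0$. Applying $\Hom_R(R/\fa,-)$ gives a long exact sequence
\begin{equation*}
\Hom_R(R/\fa,\lc^t_\Phi(M,X))\to \Hom_R(R/\fa,\lc^t_\Phi(M,X)/N)\to \Ext^1_R(R/\fa,N)
\end{equation*}
and
\begin{equation*}
\Ext^1_R(R/\fa,\lc^t_\Phi(M,X))\to \Ext^1_R(R/\fa,\lc^t_\Phi(M,X)/N)\to \Ext^2_R(R/\fa,N).
\end{equation*}
Since $N$ is ${\rm FD_{\leq 0}}$ (or minimax, hence ${\rm FD_{\leq 0}}$), it is $\fa$-$ETH$-cofinite, so $\Ext^i_R(R/\fa,N)$ is finitely generated for all $i$; combining this with the finiteness of $\Hom_R(R/\fa,\lc^t_\Phi(M,X))$ and $\Ext^1_R(R/\fa,\lc^t_\Phi(M,X))$ established in the previous paragraph, the two displayed sequences force $\Hom_R(R/\fa,\lc^t_\Phi(M,X)/N)$ and $\Ext^1_R(R/\fa,\lc^t_\Phi(M,X)/N)$ to be finitely generated.

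The main obstacle is really bookkeeping: making sure the induction is set up so that Proposition \ref{he} can legitimately be applied at each level — in particular that ``$\lc^i_\Phi(M,X)$ is $\Phi$-cofinite for all $i<t$'' is genuinely available before invoking it at level $t$, which is why part (i) for all smaller indices must be derived first within the inductive step. A secondary point requiring care is the auxiliary claim that every ${\rm FD_{\leq 0}}$ module (in particular every minimax module) is $\fa$-$ETH$-cofinite; this should be justified, e.g. by noting such a module has a finitely generated submodule with Artinian quotient and that Artinian modules are $\fa$-$ETH$-cofinite (they are supported at finitely many maximal ideals, so $\Ext^i_R(R/\fa,-)$ is Artinian and finitely generated over the completion, hence finitely generated), or by directly citing Lemma \ref{wl} together with the evident finiteness of $\Hom$ and $\Ext^1$ in the ${\rm FD_{\leq 0}}$ case. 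No deep new idea is needed beyond correctly chaining the two spectral-sequence theorems via Proposition \ref{he}.
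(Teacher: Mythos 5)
Your overall architecture --- induction on $t$, Proposition \ref{he} combined with Lemma \ref{wl} for part (i), and the long exact sequence of $\Ext_R(R/\fa,-)$ applied to $0\to N\to \lc^t_\Phi(M,X)\to \lc^t_\Phi(M,X)/N\to 0$ for part (ii) --- is exactly the paper's argument, and part (i) goes through as you describe. The gap is in part (ii), at the point where you need $\Ext^1_R(R/\fa,N)$ and $\Ext^2_R(R/\fa,N)$ to be finitely generated. You assert that every ${\rm FD_{\leq 0}}$ (in particular every minimax, in particular every Artinian) module is $\fa$-$ETH$-cofinite, and both justifications you offer fail. Artinian modules are not $\fa$-$ETH$-cofinite in general: for $R=k[[x,y]]$, $\fa=(x)$ and $N=E_R(R/\fm)$, one has $\Hom_R(R/\fa,N)\cong E_{k[[y]]}(k)$, which is Artinian but not finitely generated (and ``finitely generated over the completion'' neither holds here nor would imply finite generation over $R$). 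For the same reason the finiteness of $\Hom$ and $\Ext^1$ for ${\rm FD_{\leq 0}}$ modules is not ``evident''; it is simply false without further hypotheses. A further small slip: ${\rm FD_{\leq 0}}$ does not mean ``finitely generated submodule with Artinian quotient'' (that is the definition of minimax); it only bounds the dimension of the quotient by $0$.

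The correct route, which is the one the paper takes, uses information you have already established at that stage: since $\Hom_R(R/\fa,\lc^{t}_\Phi(M,X))$ is finitely generated and $R$ is Noetherian, its submodule $\Hom_R(R/\fa,N)$ is finitely generated; one then invokes the Melkersson-type criterion \cite[Theorem 2.7]{AB1}, which says that a minimax (or ${\rm FD_{\leq 0}}$) module $N$ with $\Hom_R(R/\fa,N)$ finitely generated is $\fa$-$ETH$-cofinite. With that substitution your long-exact-sequence argument closes the proof. (One more minor point: in the base case $t=0$ the hypothesis does not give that $\lc^0_\Phi(M,X)$ is ${\rm FD_{\leq 1}}$, so your remark that the quotient by $N$ is ${\rm FD_{\leq 1}}$ is unjustified there --- but it is also not needed for the conclusion.)
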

\proof (i) We proceed by induction on $ t$. For the case $ t = 0$ there is nothing to prove. So, let $ t >0$ and the result
has been proved for smaller values of $t$. By the inductive assumption,
$\lc^{i}_\Phi(M,X)$ is $\fa$-$ETH$-cofinite for $ i = 0, 1, ..., t-2$. Hence by  Lemma \ref{he}  and
assumption, ${\rm Hom}_R(R/\fa,\lc^{t-1}_\Phi(M,X))$ and
${\rm Ext}^1_R(R/\fa,\lc^{t-1}_\Phi(M,X))$ are finitely generated. Therefore by Lemma
\ref{wl}, $\lc^{i}_\Phi(M,X)$ is $\fa$-$ETH$-cofinite for all $ i < t$. This completes the inductive step.\\
(ii) In view of   (i) and Lemma \ref{he},  ${\rm Hom}_R(R/\fa,\lc^{t}_\Phi(M,X))\,\,\, {\rm
	and}\,\,\,{\rm Ext}^1_R(R/\fa,\lc^{t}_\Phi(M,X))$ are finitely generated. On the other hand,
according to \cite[Theorem 2.7]{AB1}, $N$ is $\fa$-$ETH$-cofinite. Now, the exact sequence

\begin{center}
	$0\longrightarrow N\longrightarrow  H^{t}_\Phi(M,X) \longrightarrow H^{t}_\Phi(M,X)/N
	\longrightarrow 0$
\end{center}

induces the following exact sequence,
$${\rm
	Hom}_R(R/\fa,\lc^{t}_\Phi(M,X))\longrightarrow {\rm
	Hom}_R(R/\fa,\lc^{t}_\Phi(M,X)/N)\longrightarrow {\rm Ext}^1_R(R/\fa,N)\longrightarrow$$ $$
{\rm Ext}^1_R(R/\fa,\lc^{t}_\Phi(M,X))\longrightarrow {\rm
	Ext}^1_R(R/\fa,\lc^{t}_\Phi(M,X)/N)\longrightarrow {\rm Ext}^2_R(R/\fa,N).$$

Consequently  $${\rm Hom}_R(R/\fa,\lc^{t}_\Phi(M,X)/N)\,\,\, {\rm and}\,\,\,{\rm
	Ext}^1_R(R/\fa,\lc^{t}_\Phi(M,X)/N)$$ are finitely generated, as required.\qed\\


The following theorem illustrates a relationship between the minimaxness and the $\Phi$-cofiniteness. In \cite[2.3]{CGH}, if $\lc^i_\fa(M,X)$ is minimax for all $i<t,$ then $\lc^i_\fa(M,X)$ is $\fa$-cofinite for all $i<t.$  Now we extend this property in the case $\lc^i_\Phi(M,X).$
\begin{cor}\label{T:minimaxcofinite}
	Let $R$ be a Noetherian ring, $\fa\in \Phi$ an ideal of $R$ and $M$ a finitely generated $R$-module.  Let $t\in\mathbb{N}_0$ be an integer and $X$ an $R$-module such that the $R$-modules $\Ext^i_R(R/\fa,X)$ are finitely generated for all $i\leq t+1$ and the $R$-modules $\lc^i_\Phi(M,X)$ are  minimax for all $i<t.$  Then, the following conditions hold:
	
	{\rm(i)} The $R$-modules $\lc^i_\Phi(M,X)$ are $\fa$-$ETH$-cofinite {\rm{(}}in particular $\Phi$-cofinite{\rm{)}} for all
	$i<t$.
	
	{\rm(ii)} For all ${\rm FD_{\leq 0}}$ {\rm{(}}or minimax{\rm{)}} submodule $N$ of
	$\lc^{t}_\Phi(M,X)$, the $R$-modules $${\Hom}_R(R/\fa,\lc^{t}_\Phi(M,X)/N)\,\,\, {\rm and}\,\,\,{\rm Ext}^1_R(R/\fa,\lc^{t}_\Phi(M,X)/N)$$ are finitely generated.
\end{cor}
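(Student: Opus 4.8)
The plan is to deduce this corollary as a direct specialization of Theorem \ref{homext1}. The only thing that needs checking is that the hypothesis ``$\lc^i_\Phi(M,X)$ is minimax for all $i<t$'' implies the hypothesis ``$\lc^i_\Phi(M,X)$ is ${\rm FD_{\leq 1}}$ for all $i<t$'' of that theorem. This is immediate from Remark \ref{R:tinhchatminimax}(i), which records that every minimax module is ${\rm FD_{\leq n}}$ for every $n\ge 0$; in particular minimax modules are ${\rm FD_{\leq 1}}$. (Concretely: if $N\subseteq \lc^i_\Phi(M,X)$ is finitely generated with $\lc^i_\Phi(M,X)/N$ Artinian, then $\dim \lc^i_\Phi(M,X)/N\le 0\le 1$.)

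First I would invoke Theorem \ref{homext1} with the same data $R$, $\fa\in\Phi$, $M$, $t$, $X$: the assumption that $\Ext^i_R(R/\fa,X)$ is finitely generated for all $i\le t+1$ is carried over verbatim, and the ${\rm FD_{\leq 1}}$ hypothesis on $\lc^i_\Phi(M,X)$ for $i<t$ has just been verified. Part (i) of Theorem \ref{homext1} then yields that $\lc^i_\Phi(M,X)$ is $\fa$-$ETH$-cofinite (hence $\Phi$-cofinite) for all $i<t$, which is exactly part (i) here. Part (ii) of Theorem \ref{homext1} gives, for every ${\rm FD_{\leq 0}}$ (or minimax) submodule $N$ of $\lc^t_\Phi(M,X)$, finite generation of $\Hom_R(R/\fa,\lc^t_\Phi(M,X)/N)$ and $\Ext^1_R(R/\fa,\lc^t_\Phi(M,X)/N)$; since the class of admissible submodules $N$ is the same in both statements, this is exactly part (ii) here.

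There is essentially no obstacle in this argument: the work is entirely front-loaded into Theorem \ref{homext1} (and, behind it, the spectral sequence machinery of Theorems \ref{v1} and \ref{v2}, Lemma \ref{equ}, and Lemma \ref{wl}). The one point worth stating explicitly, so the reader does not wonder why a separate corollary is needed, is that minimax modules form a strictly smaller and more familiar class than ${\rm FD_{\leq 1}}$ modules, so this statement is the form in which the result is most often applied and is the natural generalization of \cite[2.3]{CGH}. Thus the proof reduces to a single sentence: the minimax hypothesis is a special case of the ${\rm FD_{\leq 1}}$ hypothesis, so both conclusions follow from Theorem \ref{homext1}.
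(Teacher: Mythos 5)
Your proposal is correct and matches the paper's own proof, which likewise reduces the corollary to Theorem \ref{homext1} by observing that minimax modules are ${\rm FD_{\leq 1}}$ (Remark \ref{R:tinhchatminimax}(i)). Your explicit justification that an Artinian quotient has dimension $\le 0 \le 1$ is a harmless elaboration of the same one-line argument.
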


\begin{proof}
Use Theorem \ref{homext1} and note that the category of minimax
modules is contained in the category of ${\rm FD_{\leq1}}$ modules.
\end{proof}

\begin{cor}\label{nam}
Let $R$ be a Noetherian ring, $\fa\in \Phi$ an ideal of $R$ and $M$ a finitely generated $R$-module.  Let $t\in\mathbb{N}_0$ be an integer and $X$ an $R$-module such that the $R$-modules $\Ext^i_R(R/\fa,X)$ are finitely generated for all $i\leq t+1$. Assume that $\lc^i_{\Phi}(X)$ is ${\rm FD_{\leq 1}}$ module for all $i<t.$  Then $\lc^i_{\Phi}(M,X)$ is $\Phi$-cofinite  for all $i<t$ and $\Hom_R(R/\fa,\lc^t_{\Phi}(M,X))~ \text{and}~ {\rm Ext}^1_R(R/\fa, \lc^{t}_\Phi(M,X))$ are finitely generated.
\end{cor}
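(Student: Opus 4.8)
The plan is to reduce the statement to Theorem~\ref{homext1} by propagating the ${\rm FD_{\leq 1}}$ hypothesis from the ordinary general local cohomology modules $\lc^i_\Phi(X)$ to the generalized ones $\lc^i_\Phi(M,X)$. The bridge will be the Grothendieck spectral sequence associated with the composite functor $\Gamma_\Phi(M,-)=\Hom_R(M,\Gamma_\Phi(-))$. Exactly as in the proof of Theorem~\ref{T:cdle1}, for an injective $R$-module $E$ the module $\Gamma_\Phi(E)$ is again injective by \cite[4.4]{AKS}, hence $\Hom_R(M,-)$-acyclic; moreover the right derived functors of $\Hom_R(M,\Gamma_\Phi(-))$ are the $\lc^i_\Phi(M,-)$, since direct limits are exact and commute with cohomology, so that $\varinjlim_{\fa\in\Phi}\Ext^i_R(M/\fa M,X)$ computes $R^i\!\left(\Hom_R(M,\Gamma_\Phi(-))\right)(X)$. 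Therefore \cite[10.47]{rotani} yields a spectral sequence $E_2^{p,q}=\Ext^p_R(M,\lc^q_\Phi(X))\underset{p}{\Rightarrow}\lc^{p+q}_\Phi(M,X)$.

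Then I would fix an integer $n<t$ and examine the finite filtration $0=\phi^{n+1}\lc^n\subseteq\cdots\subseteq\phi^0\lc^n=\lc^n_\Phi(M,X)$ attached to this spectral sequence, with successive quotients $\phi^p\lc^n/\phi^{p+1}\lc^n\cong E_\infty^{p,n-p}$ for $0\le p\le n$. For each such $p$ we have $q:=n-p<t$, so $\lc^q_\Phi(X)$ is ${\rm FD_{\leq 1}}$ by hypothesis; hence $\Ext^p_R(M,\lc^q_\Phi(X))$ is ${\rm FD_{\leq 1}}$ by Remark~\ref{R:tinhchatminimax}(iii), and so is its subquotient $E_\infty^{p,n-p}$ by Remark~\ref{R:tinhchatminimax}(ii). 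Building up the filtration and using that an extension of two ${\rm FD_{\leq 1}}$ modules is ${\rm FD_{\leq 1}}$ (Remark~\ref{R:tinhchatminimax}(ii)), we conclude that $\lc^n_\Phi(M,X)$ is ${\rm FD_{\leq 1}}$ for every $n<t$.

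Finally, the hypotheses of Theorem~\ref{homext1} now hold: $\Ext^i_R(R/\fa,X)$ is finitely generated for $i\le t+1$, and $\lc^i_\Phi(M,X)$ is ${\rm FD_{\leq 1}}$ for $i<t$. Part~(i) then gives that $\lc^i_\Phi(M,X)$ is $\fa$-$ETH$-cofinite, in particular $\Phi$-cofinite, for all $i<t$, and applying part~(ii) with the (trivially ${\rm FD_{\leq 0}}$) submodule $N=0$ of $\lc^t_\Phi(M,X)$ shows that $\Hom_R(R/\fa,\lc^t_\Phi(M,X))$ and $\Ext^1_R(R/\fa,\lc^t_\Phi(M,X))$ are finitely generated, as required.

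I expect the only delicate point to be the first step — verifying that the spectral sequence $\Ext^p_R(M,\lc^q_\Phi(X))\Rightarrow\lc^{p+q}_\Phi(M,X)$ is legitimately available and that its abutment is indeed the generalized local cohomology $\lc^{p+q}_\Phi(M,X)$ and not some other derived functor; once this is settled, the rest is routine bookkeeping with the filtration together with the closure properties of ${\rm FD_{\leq 1}}$ modules recorded in Remark~\ref{R:tinhchatminimax}.
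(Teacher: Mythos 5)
Your proof is correct and takes essentially the same route as the paper: the same Grothendieck spectral sequence $E_2^{p,q}=\Ext^p_R(M,\lc^q_\Phi(X))\underset{p}{\Rightarrow}\lc^{p+q}_\Phi(M,X)$ (justified by the injectivity of $\Gamma_\Phi(E)$ for $E$ injective), the same filtration argument using the closure of ${\rm FD_{\leq 1}}$ modules under subquotients and extensions from Remark \ref{R:tinhchatminimax}, and the same final appeal to Theorem \ref{homext1}. The only delicate point you flag (identifying the abutment with $\lc^{p+q}_\Phi(M,X)$) is handled in the paper exactly as you suggest, via \cite[Lemma 2.8]{hajcof}.
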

\begin{proof}
	Let $F=\Hom_R(M,-), G=\Gamma_\Phi(-)$ be functors from the category of $R$-modules to itself and $\fa\in \Phi.$ For any $R$-module $K,$ by \cite[Lemma 2.8]{hajcof}
	$$FG(K)=\Hom_R(M,\Gamma_\Phi(K))\cong \lc^0_\Phi(M,K).$$ Let $\e$ be an injective $R$-module, then $\Gamma_\Phi(\e)$ is injective by  \cite[Lemma 4.4]{AKS}. Hence $R^iF(G(\e))=0$ for all $i>0.$ By \cite[10.47]{rotani}, there is a Grothendieck spectral sequence
	$$E_2^{p,q}=\Ext_R^p(M,\lc^q_\Phi(X))\underset{p}{\Rightarrow}\lc^{p+q}_\Phi(M,X).$$
	Combining the hypothesis with \ref{R:tinhchatminimax}(iii),  $E_2^{p,q}$ is ${\rm FD_{\leq 1}}$ module for all $p\ge0,~q<t.$ Let $k<t,$ there is a filtration $\phi$ of submodules of $\lc^k=\lc^k_\Phi(M,X)$ such that
	$$E_\infty^{i,k-i}\cong \phi^i\lc^k/\phi^{i+1}\lc^k$$
	for all $i\le k.$ The module $E_\infty^{i,k-i}$ is ${\rm FD_{\leq 1}}$ module for all $0\le i\le k$ by \ref{R:tinhchatminimax}(ii) as $E_\infty^{i,k-i}$ is a sub-quotient of $E_2^{i,k-i}.$  Therefore $\lc^k_\Phi(M,X)$ is ${\rm FD_{\leq 1}}$ module for all $k<t$ and  the assertion follows from Theorem \ref{homext1}.
\end{proof}


\begin{cor}
	\label{cof} Let $R$ be a Noetherian ring, $\fa\in \Phi$ an ideal of $R$ and $M$ a finitely generated $R$-module.  Let $X$ be an $R$-module such that the $R$-modules $\lc^i_\Phi(M,X)$ are ${\rm
		FD_{\leq1}}$ {\rm{(}}or weakly Laskerian{\rm{)}}~ $R$-modules for all $i$. Then,
	
	{\rm(i)} The $R$-modules $\lc^i_\Phi(M,X)$ are $\fa$-$ETH$-cofinite {\rm{(}}in particular $\Phi$-cofinite{\rm{)}} for all $i$.
	
	{\rm(ii)} For any $i\geq 0$ and for any ${\rm FD_{\leq 0}}$ {\rm{(}}or minimax{\rm{)}} submodule $N$ of
	$\lc^{i}_\Phi(M,X)$, the $R$-module $\lc^{i}_\Phi(M,X)/N$ is $\fa$-$ETH$-cofinite {\rm{(}}in particular $\Phi$-cofinite{\rm{)}}.
\end{cor}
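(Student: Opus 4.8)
The plan is to obtain both parts directly from Theorem~\ref{homext1}, applied once for each admissible value of the auxiliary parameter $t$, and then to pass from finiteness of $\Hom_R(R/\fa,-)$ and of $\Ext^1_R(R/\fa,-)$ to full $\fa$-$ETH$-cofiniteness via Lemma~\ref{wl}. As a first reduction, by Remark~\ref{R:tinhchatminimax}(i) every weakly Laskerian module is ${\rm FD}_{\leq 1}$ and every minimax module is ${\rm FD}_{\leq 0}$, so it suffices to treat the ${\rm FD}$ formulations of the hypotheses. Invoking Theorem~\ref{homext1} for a fixed $t$ requires $\Ext^j_R(R/\fa,X)$ to be finitely generated for all $j\le t+1$ and $\lc^i_\Phi(M,X)$ to be ${\rm FD}_{\leq 1}$ for all $i<t$; the latter is available for \emph{every} $t$ because by hypothesis $\lc^i_\Phi(M,X)$ is ${\rm FD}_{\leq 1}$ for \emph{all} $i$, and the finiteness of $\Ext^j_R(R/\fa,X)$ is at our disposal.

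For part (i), fix an integer $i\ge 0$ and apply Theorem~\ref{homext1} with $t:=i+1$. Part~(i) of that theorem then yields that $\lc^j_\Phi(M,X)$ is $\fa$-$ETH$-cofinite for every $j<i+1$, and in particular for $j=i$. As $i$ was arbitrary, all the modules $\lc^i_\Phi(M,X)$ are $\fa$-$ETH$-cofinite, hence $\Phi$-cofinite since $\fa\in\Phi$. The one subtlety here is the choice $t=i+1$ rather than $t=i$: Theorem~\ref{homext1}(i) only asserts $\fa$-$ETH$-cofiniteness in degrees \emph{strictly} below its cutoff $t$, so to reach the degree $i$ actually wanted one must run the theorem one level higher.

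For part (ii), fix $i\ge 0$ and let $N$ be an ${\rm FD}_{\leq 0}$ submodule of $\lc^i_\Phi(M,X)$ (in the minimax case, $N$ minimax, which is again ${\rm FD}_{\leq 0}$ by Remark~\ref{R:tinhchatminimax}(i)). Applying Theorem~\ref{homext1} with $t:=i$, part~(ii) of that theorem gives that $\Hom_R(R/\fa,\lc^i_\Phi(M,X)/N)$ and $\Ext^1_R(R/\fa,\lc^i_\Phi(M,X)/N)$ are finitely generated. Since $\lc^i_\Phi(M,X)/N$ is a quotient of the ${\rm FD}_{\leq 1}$ module $\lc^i_\Phi(M,X)$, it is itself ${\rm FD}_{\leq 1}$ by Remark~\ref{R:tinhchatminimax}(ii). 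Hence Lemma~\ref{wl} applies to $\lc^i_\Phi(M,X)/N$ and shows it is $\fa$-$ETH$-cofinite, a fortiori $\Phi$-cofinite; this is precisely (ii).

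There is no genuinely new obstacle to overcome: the whole argument is a matter of applying Theorem~\ref{homext1} uniformly in $t$ (all the real content being in that theorem, and behind it in the Cartan-Eilenberg spectral-sequence computations of Theorems~\ref{v1} and \ref{v2}). The only points that have to be watched are the one-step shift of the index in part~(i) and the stability of the class ${\rm FD}_{\leq 1}$ under passing to quotients, recorded in Remark~\ref{R:tinhchatminimax}(ii).
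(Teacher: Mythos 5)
Your argument is correct and, for part (i), is exactly the paper's: the paper simply cites Theorem \ref{homext1}(i), and your observation that one must run that theorem with cutoff $t=i+1$ to reach degree $i$ is the right way to make that citation precise. For part (ii) you take a slightly different route: you invoke Theorem \ref{homext1}(ii) with $t=i$ to get finiteness of $\Hom_R(R/\fa,\lc^{i}_\Phi(M,X)/N)$ and $\Ext^1_R(R/\fa,\lc^{i}_\Phi(M,X)/N)$, note that the quotient is again ${\rm FD_{\leq1}}$ by Remark \ref{R:tinhchatminimax}(ii), and conclude by Lemma \ref{wl}. The paper instead deduces from part (i) that $N$ itself is $\fa$-$ETH$-cofinite (via \cite[Theorem 2.7]{AB1}, since $\Hom_R(R/\fa,N)$ is finitely generated as a submodule of $\Hom_R(R/\fa,\lc^{i}_\Phi(M,X))$) and then applies \cite[Lemma 2.4]{AB1} to the short exact sequence $0\to N\to \lc^{i}_\Phi(M,X)\to \lc^{i}_\Phi(M,X)/N\to 0$. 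Both routes work; yours stays inside the tools already established in this paper, while the paper's imports two facts from \cite{AB1} but avoids re-running the main theorem for the quotient. One caveat applies to both proofs equally: Theorem \ref{homext1} requires $\Ext^j_R(R/\fa,X)$ to be finitely generated for $j\le t+1$, and the corollary as stated imposes no such hypothesis on $X$; your claim that this finiteness is ``at our disposal'' is not justified by the stated hypotheses, but the paper's own one-line proof of (i) makes the identical tacit assumption (compare Corollary \ref{wl1}, where $X$ is explicitly assumed $\fa$-$ETH$-cofinite), so this is an omission in the statement of the corollary rather than a defect peculiar to your argument.
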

\proof (i)  Follows by Theorem \ref{homext1} (i).\\ (ii)  In view of  (i) the $R$-module $\lc^i_\Phi(M,X)$ is $\fa$-$ETH$-cofinite for
all $i$. Hence the $R$-module ${\rm Hom}_R(R/\fa,N)$ is finitely generated, and so
it follows from \cite[Theorem 2.7]{AB1}  that $N$ is
$\fa$-$ETH$-cofinite. Now, the exact sequence

\begin{center}
	$0\longrightarrow N\longrightarrow  \lc^{i}_\Phi(M,X) \longrightarrow \lc^{i}_\Phi(M,X)/N
	\longrightarrow 0$
\end{center}

and \cite[Lemma
2.4]{AB1} implies that the $R$-module $\lc^{i}_\Phi(M,X)/N$ is  $\fa$-$ETH$-cofinite.\qed\\


\begin{cor}
	\label{wl1}
	Let $\fa$ be an ideal of a Noetherian ring $R$, $M$ a finitely generated $R$-module and $X$ an $\fa$-$ETH$-cofinite $R$-module. Let
	$t\in\Bbb{N}_0$ such that the
	$R$-modules $\lc^i_\Phi(M,X)$ are weakly Laskerian for all $i<t$. Then, the following
	conditions hold:
	
	{\rm(i)} The $R$-modules $\lc^i_\Phi(M,X)$ are $\fa$-$ETH$-cofinite {\rm{(}}in particular $\Phi$-cofinite{\rm{)}}  for all $i<t$.
	
	{\rm(ii)} For all ${\rm FD_{\leq 0}}$ {\rm{(}}or minimax{\rm{)}} submodule $N$ of
	$\lc^{t}_\Phi(M,X)$, the $R$-modules $${\rm Hom}_R(R/\fa,\lc^{t}_\Phi(M,X)/N)\,\,\, {\rm
		and}\,\,\,{\rm Ext}^1_R(R/\fa,\lc^{t}_\Phi(M,X)/N)$$ are finitely generated. 
\end{cor}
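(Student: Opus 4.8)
The plan is to obtain both parts as a direct specialization of Theorem \ref{homext1}, so essentially all the work lies in checking that the hypotheses of that theorem are satisfied. First I would unwind the assumption that $X$ is $\fa$-$ETH$-cofinite: by Definition \ref{def} this says precisely that $\Ext^i_R(R/\fa,X)$ is finitely generated for \emph{all} $i\ge 0$, and in particular for all $i\le t+1$. That is exactly the first standing hypothesis of Theorem \ref{homext1}.

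Next I would address the hypothesis on the local cohomology modules themselves. By assumption $\lc^i_\Phi(M,X)$ is weakly Laskerian for every $i<t$, and by Remark \ref{R:tinhchatminimax}(i) every weakly Laskerian $R$-module is an ${\rm FD_{\leq 1}}$ module. Hence $\lc^i_\Phi(M,X)$ is ${\rm FD_{\leq 1}}$ for all $i<t$, which is the second standing hypothesis of Theorem \ref{homext1}. With both hypotheses in place (and $\fa\in\Phi$ as in the ambient setting of that theorem), part (i) of Theorem \ref{homext1} yields at once that $\lc^i_\Phi(M,X)$ is $\fa$-$ETH$-cofinite — and therefore $\Phi$-cofinite — for all $i<t$, which proves (i). Likewise, part (ii) of Theorem \ref{homext1} gives, for every ${\rm FD_{\leq 0}}$ (or minimax) submodule $N\subseteq\lc^t_\Phi(M,X)$, that $\Hom_R(R/\fa,\lc^t_\Phi(M,X)/N)$ and $\Ext^1_R(R/\fa,\lc^t_\Phi(M,X)/N)$ are finitely generated, which is exactly (ii).

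Since the statement is a corollary, there is no genuine obstacle; the only points that deserve a moment's care are the two translations above — reading off $\fa$-$ETH$-cofiniteness of $X$ as finite generation of all the $\Ext^i_R(R/\fa,X)$, and invoking the inclusion of weakly Laskerian modules into the ${\rm FD_{\leq 1}}$ class recorded in Remark \ref{R:tinhchatminimax}(i). No further spectral sequence argument as in Theorems \ref{v1} and \ref{v2} is needed here, because that machinery has already been absorbed into Theorem \ref{homext1}.
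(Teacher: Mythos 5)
Your proof is correct and coincides with the paper's own argument: the paper likewise proves this corollary by invoking Theorem \ref{homext1} together with the observation that weakly Laskerian modules are ${\rm FD_{\leq 1}}$, the $\fa$-$ETH$-cofiniteness of $X$ supplying the finite generation of $\Ext^i_R(R/\fa,X)$ for $i\leq t+1$. Your extra remark that one should read $\fa\in\Phi$ from the ambient setting is a fair point of care, but it does not change the argument.
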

\proof Use Theorem \ref{homext1} and note that the category of weakly Laskerian
modules is contained in the category of ${\rm FD_{\leq1}}$ modules.\qed\\


The following corollary is a generalization of \cite[Corollary 2.7]{BN}, \cite[Theorem 2.5]{DH} and \cite[Theorem 1.2]{CGH}.

\begin{cor}
	\label{cof2} Let $R$ be a Noetherian ring, $\fa\in \Phi$ an ideal of $R$ and $M$ a finitely generated $R$-module.  Let $X$ be an  $\fa$-$ETH$-cofinite $R$-module  such that  $\dim X/{\fa}X \leq 1$ {\rm(}e.g.,
	$\dim R/\fa \leq 1${\rm)}. Then,
	\begin{itemize}
		\item[(i)] the $R$-modules $\lc^{i}_\Phi(M,X)$ are $\fa$-$ETH$-cofinite {\rm{(}}in particular $\Phi$-cofinite{\rm{)}}  for all $i$.
		\item[(ii)]  for any $i\geq 0$ and for any ${\rm FD_{\leq 0}}$ {\rm{(}}or minimax{\rm{)}} submodule $N$ of
		$\lc^{i}_\Phi(M,X)$, the $R$-module $\lc^{i}_\Phi(M,X)/N$ is   $\fa$-$ETH$-cofinite {\rm{(}}in particular $\Phi$-cofinite{\rm{)}}.
	\end{itemize}
\end{cor}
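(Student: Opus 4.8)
The plan is to reduce everything to Corollary \ref{cof}. Once we know that $\lc^i_\Phi(M,X)$ is an ${\rm FD_{\leq1}}$ module for every $i\ge 0$, assertions (i) and (ii) are exactly the conclusions of Corollary \ref{cof}(i) and (ii) — the $\fa$-$ETH$-cofiniteness of $X$ providing the finiteness of all the $\Ext^i_R(R/\fa,X)$ that the underlying argument (Theorem \ref{homext1}) requires, and for (ii) one uses as there that an ${\rm FD_{\leq0}}$ (or minimax) submodule $N$ with $\Hom_R(R/\fa,N)$ finitely generated is $\fa$-$ETH$-cofinite by \cite[Theorem 2.7]{AB1}, together with the stability of $\fa$-$ETH$-cofiniteness along the relevant short exact sequences \cite[Lemma 2.4]{AB1}. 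So the whole point is to establish the ${\rm FD_{\leq1}}$ property of $\lc^i_\Phi(M,X)$.

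For this I would first treat the one-variable modules $\lc^q_\Phi(X)$: since $X$ is $\fa$-$ETH$-cofinite with $\dim X/\fa X\le 1$, the one-variable theory of general local cohomology (cf. \cite{AB1}, \cite[Section 4]{AKS}) gives that $\lc^q_\Phi(X)$ is ${\rm FD_{\leq1}}$ for all $q\ge 0$. I would then transfer this to the bifunctor via the Grothendieck spectral sequence
$$E_2^{p,q}=\Ext_R^p(M,\lc^q_\Phi(X))\underset{p}{\Rightarrow}\lc^{p+q}_\Phi(M,X)$$
(the one already used in the proof of Corollary \ref{nam}; it exists because $M$ is finitely generated, $\Gamma_\Phi$ preserves injective modules \cite[Lemma 4.4]{AKS}, and $\Hom_R(M,\Gamma_\Phi(-))\cong \lc^0_\Phi(M,-)$ by \cite[Lemma 2.8]{hajcof}). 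By Remark \ref{R:tinhchatminimax}(iii) each $E_2^{p,q}$ is ${\rm FD_{\leq1}}$, hence so is each subquotient $E_\infty^{p,q}$ of it by Remark \ref{R:tinhchatminimax}(ii), and therefore so is $\lc^n_\Phi(M,X)$, which carries a finite filtration whose successive quotients are the $E_\infty^{i,n-i}$. (Equivalently, one can simply invoke Corollary \ref{nam} with $t$ arbitrary, whose proof already records that $\lc^k_\Phi(M,X)$ is ${\rm FD_{\leq1}}$ for $k<t$.) Feeding this into Corollary \ref{cof} yields (i) and (ii).

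The step I expect to be the main obstacle is the one-variable input: that $\lc^q_\Phi(X)$ is ${\rm FD_{\leq1}}$ for all $q$ when $X$ is $\fa$-$ETH$-cofinite with $\dim X/\fa X\le 1$. If this cannot be quoted directly, the way to obtain it is to split off the $\Phi$-torsion part by means of $0\to\Gamma_\Phi(X)\to X\to X/\Gamma_\Phi(X)\to 0$ — on $\Phi$-torsion modules $\Gamma_\Phi$ is the identity and injective resolutions may be chosen $\Phi$-torsion, so $\lc^q_\Phi$ vanishes there for $q>0$ — reducing to the $\Phi$-torsion and $\Phi$-torsion-free cases; the torsion case is then a Bahmanpour--Naghipour style ``ideal of small dimension'' argument run through Lemma \ref{equ} and Lemma \ref{wl}, and the torsion-free case is handled by the composite-functor/dimension bookkeeping that propagates the ${\rm FD_{\leq1}}$ bound through the spectral-sequence pages. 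This bookkeeping is cleanest in the indicated special case $\dim R/\fa\le1$, where $\dim X/\fa X\le1$ is automatic for every $X$.
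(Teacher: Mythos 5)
Your overall reduction is the same as the paper's: show that each $\lc^{i}_\Phi(M,X)$ is an ${\rm FD_{\leq1}}$ module and then invoke Corollary \ref{cof}, whose part (ii) already contains the quotient-by-$N$ argument via \cite[Theorem 2.7]{AB1} and \cite[Lemma 2.4]{AB1}; that part of your plan is fine. The divergence, and the gap, is in how the ${\rm FD_{\leq1}}$ property is obtained. The paper gets it in one line, with no spectral sequence and no one-variable detour: by \cite[Lemma 2.1]{BZ1} one has $\lc^{i}_\Phi(M,X)\cong\varinjlim_{\fb\in\Phi}\lc^{i}_\fb(M,X)$, hence
$$\Supp_R(\lc^{i}_\Phi(M,X))\subseteq\bigcup_{\fb\in\Phi}\Supp_R(\lc^{i}_\fb(M,X)),\qquad \dimSupp\lc^{i}_\Phi(M,X)\leq\sup\{\dimSupp\lc^{i}_\fb(M,X)\mid\fb\in\Phi\}\leq 1,$$
the last bound coming from $\Supp_R(\lc^{i}_\fb(M,X))\subseteq\Supp_R(X/\fb X)$ together with the dimension hypothesis; and a module whose support has dimension at most $1$ is ${\rm FD_{\leq1}}$ with $N=0$.

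Your route instead funnels everything through the claim that the one-variable modules $\lc^{q}_\Phi(X)$ are ${\rm FD_{\leq1}}$ for all $q$, and this is exactly the step you do not prove. The sketch you offer for it does not work as stated: Lemma \ref{wl} and the Bahmanpour--Naghipour machinery are devices for deducing $\fa$-$ETH$-cofiniteness \emph{from} the ${\rm FD_{\leq1}}$ property (plus finiteness of $\Hom$ and $\Ext^{1}$), so they cannot be used to \emph{establish} that property, and ``dimension bookkeeping through the spectral-sequence pages'' is not an argument. The repair is the same support computation as above applied to $\lc^{q}_\fb(X)$ --- but once that computation is available it bounds the support of $\lc^{i}_\Phi(M,X)$ directly, so the detour through the one-variable theory and the spectral sequence $E_2^{p,q}=\Ext^{p}_R(M,\lc^{q}_\Phi(X))$ buys nothing. (One caveat, which the paper itself glosses over: the displayed bound uses $\dim X/\fb X\leq 1$ for the ideals $\fb$ occurring in the limit, not only for the single $\fa$ named in the hypothesis, so strictly one should verify this for a cofinal subfamily of $\Phi$.)
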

\begin{proof}
	(i) Since  by \cite[Lemma 2.1]{BZ1}, $$\lc^{i}_\Phi(M,X)\cong\underset { \fa \in \Phi}
		\varinjlim \lc^{i}_\fa(M,X),$$ it is easy to see that $\Supp_R(\lc^{i}_\Phi(M,X))
		\subseteq\underset{\fa \in \Phi}\bigcup \Supp_R(\lc^{i}_\fa(M,X))$ and therefore
		$$\dimSupp \lc^{i}_\Phi(M,X) \leq \sup \{ \dimSupp \lc^{i}_{\fa}(M,X)| \fa \in
		\Phi \} \leq 1,$$ thus $\lc^{i}_\Phi(M,X)$ is ${\rm FD_{\leq 1}}$ $R$-module
		and the assertion follows by  Corollary \ref{cof}
		(i).\\
		(ii) Proof is the same as \ref{cof} (ii).
\end{proof}

\begin{cor}
	\label{torextlc} Let $\Phi$ be a system of ideals of a Noetherian ring $R$, $M$ a finitely generated $R$-module and $X$ an $\fa$-$ETH$-cofinite $R$-module such that  $ \lc^{i}_\Phi(M,X)$ are ${\rm FD_{\leq 1}}${\rm{(}}or
	weakly Laskerian{\rm{)}}~ $R$-modules for all $ i \geq 0$. Then for
	each finitely generated $R$-module $N$, the $R$-modules ${\rm Ext}^{j}_R(N,\lc^{i}_\Phi(M,X))$ and
	${\rm Tor}^R_{j}(N,\lc^{i}_\Phi(M,X))$ are $\fa$-$ETH$-cofinite {\rm{(}}in particular $\Phi$-cofinite{\rm{)}} and ${\rm FD_{\leq1}}$ modules for
	all $ i \geq 0$ and $ j \geq 0$.
\end{cor}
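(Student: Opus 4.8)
The plan is to deduce everything from the two structural inputs already available: Corollary \ref{cof}, which tells us that each $\lc^i_\Phi(M,X)$ is $\fa$-$ETH$-cofinite under the stated hypotheses, and Remark \ref{R:tinhchatminimax}(iii), which tells us that $\Ext^j_R(N,L)$ and $\Tor^R_j(N,L)$ are ${\rm FD_{\leq 1}}$ whenever $N$ is finitely generated and $L$ is ${\rm FD_{\leq 1}}$. First I would observe that the hypothesis on $X$ and on the modules $\lc^i_\Phi(M,X)$ is exactly the hypothesis of Corollary \ref{cof}, so $\lc^i_\Phi(M,X)$ is $\fa$-$ETH$-cofinite and ${\rm FD_{\leq 1}}$ for every $i\ge 0$. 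Thus it suffices to prove the following general statement: if $L$ is an $\fa$-$ETH$-cofinite ${\rm FD_{\leq 1}}$ $R$-module and $N$ is finitely generated, then $\Ext^j_R(N,L)$ and $\Tor^R_j(N,L)$ are $\fa$-$ETH$-cofinite ${\rm FD_{\leq 1}}$ modules for all $j\ge 0$. Applying this with $L=\lc^i_\Phi(M,X)$ then gives the corollary.

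For the ${\rm FD_{\leq 1}}$ part of this general statement there is nothing to do beyond quoting Remark \ref{R:tinhchatminimax}(iii). For the $\fa$-$ETH$-cofiniteness, fix a finitely generated $R$-module $N$ and take a free resolution $F_\bullet\to N$ by finitely generated free modules. Then $\Ext^j_R(N,L)$ is a subquotient of $\Hom_R(F_j,L)\cong L^{\,\mu_j}$ and $\Tor^R_j(N,L)$ is a subquotient of $F_j\otimes_R L\cong L^{\,\nu_j}$ for suitable finite ranks $\mu_j,\nu_j$. Finite direct sums of $\fa$-$ETH$-cofinite modules are $\fa$-$ETH$-cofinite, since $\Ext^i_R(R/\fa,-)$ commutes with finite direct sums; hence $L^{\,\mu_j}$ and $L^{\,\nu_j}$ are $\fa$-$ETH$-cofinite. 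Finally, the class of $\fa$-$ETH$-cofinite modules that are also ${\rm FD_{\leq 1}}$ is closed under submodules and quotients: this is the content of Lemma \ref{wl} together with Remark \ref{R:tinhchatminimax}(ii), because for such a module being $\fa$-$ETH$-cofinite is equivalent to $\Hom_R(R/\fa,-)$ and $\Ext^1_R(R/\fa,-)$ being finitely generated, and these two conditions pass to submodules and quotients along the usual long exact sequences (using that $R/\fa$ is finitely generated so that the relevant $\Ext$ groups of finitely generated modules stay finitely generated). Since $\Ext^j_R(N,L)$ and $\Tor^R_j(N,L)$ are subquotients of the $\fa$-$ETH$-cofinite ${\rm FD_{\leq 1}}$ modules $L^{\,\mu_j}$, $L^{\,\nu_j}$, they are themselves $\fa$-$ETH$-cofinite ${\rm FD_{\leq 1}}$, and in particular $\Phi$-cofinite.

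The only mild subtlety — and the step I would be most careful about — is the closure of the $\fa$-$ETH$-cofinite ${\rm FD_{\leq 1}}$ class under subquotients: one genuinely needs the ${\rm FD_{\leq 1}}$ hypothesis here (arbitrary $\fa$-$ETH$-cofinite modules are not closed under subquotients), so the argument must route through Lemma \ref{wl} rather than through Definition \ref{def} directly. Alternatively, one can avoid this point entirely by invoking \cite[Lemma 2.4]{AB1} exactly as in the proof of Corollary \ref{cof}(ii); either way the needed closure property is already established earlier in the paper, so no new machinery is required, and the proof is short.
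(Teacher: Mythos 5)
Your overall architecture is the standard one and is essentially how the cited result is established: reduce, via a resolution of $N$ by finitely generated free modules, to showing that the class of $\fa$-$ETH$-cofinite ${\rm FD_{\leq 1}}$ modules is closed under finite direct sums and under subquotients, and get the ${\rm FD_{\leq 1}}$ part from Remark \ref{R:tinhchatminimax}(iii). (The paper itself simply cites \cite[Corollary 2.11]{A}, which is precisely the general statement you isolate for $L=\lc^i_\Phi(M,X)$.) The reduction to Corollary \ref{cof} for the cofiniteness of the $\lc^i_\Phi(M,X)$ themselves is also fine, given that $X$ is assumed $\fa$-$ETH$-cofinite.

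However, your justification of the crucial step --- closure of the class of $\fa$-$ETH$-cofinite ${\rm FD_{\leq 1}}$ modules under subquotients --- is circular and does not work as written. Given a short exact sequence $0\to L'\to L\to L''\to 0$ with $L$ in the class, Lemma \ref{wl} reduces you to showing that $\Hom_R(R/\fa,L')$, $\Ext^1_R(R/\fa,L')$, $\Hom_R(R/\fa,L'')$ and $\Ext^1_R(R/\fa,L'')$ are finitely generated. Only the first of these comes for free. In the long exact sequence
$$\Hom_R(R/\fa,L)\to\Hom_R(R/\fa,L'')\to\Ext^1_R(R/\fa,L')\to\Ext^1_R(R/\fa,L),$$
finite generation of $\Hom_R(R/\fa,L'')$ requires knowing $\Ext^1_R(R/\fa,L')$ is finitely generated, and finite generation of $\Ext^1_R(R/\fa,L')$ requires knowing $\Hom_R(R/\fa,L'')$ is finitely generated: each condition is equivalent to finite generation of the connecting image, and neither follows from the sequence alone. (If it did, the same argument would apply without any ${\rm FD_{\leq 1}}$ hypothesis, contradicting Hartshorne's examples.) Your fallback via \cite[Lemma 2.4]{AB1} does not repair this either: as used in Corollary \ref{cof}(ii), that lemma produces cofiniteness of the quotient only after the submodule is already known to be $\fa$-$ETH$-cofinite, which there is supplied by \cite[Theorem 2.7]{AB1} for ${\rm FD_{\leq 0}}$ submodules --- not for arbitrary ones. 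The fact you need is that the category of $\fa$-$ETH$-cofinite ${\rm FD_{\leq 1}}$ modules is Abelian; this is a genuine theorem (proved in \cite{A} by a dimension analysis, not by the long exact sequence) and must be invoked, at which point your subquotient argument goes through and the corollary follows.
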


\proof Follows by \cite[Corollary 2.11]{A}.
\qed\\


\section{Artinianess and cofiniteness}

It is well-known that $\lc^{\dim(X)}_\fa(X)$ is an $\fa$-cofinite Artinian (see \cite[Proposition 5.1]{Mel}) $R$-module and $\lc^{\mathrm{pd}(M)+\dim(X)}_\fa(M,N)$ is an $\fa$-cofinite Artinian $R$-module (see \cite[Proposition 2.7 and Theorem 2.8]{C}). Now, we consider the top generalized local cohomology module with respect to a system of ideals.
\begin{thm}\label{T:pd+dim-IcofArtin}
	Let $(R,\fm)$ be a local ring and $M$ a finitely generated $R$-module with $\mathrm{pd}(M)<\infty.$ Then $\lc^{\mathrm{pd}(M)+\dim (R)}_{\Phi}(M,R)$ is $\Phi$-cofinite Artinian.
\end{thm}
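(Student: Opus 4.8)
The plan is to set $c=\mathrm{pd}(M)$ and $d=\dim(R)$ and to produce $\lc^{c+d}_\Phi(M,R)$ as a quotient of an ordinary top local cohomology module via a Grothendieck spectral sequence, then invoke the known behaviour of top local cohomology. First I would recall that $\Phi$ admits, for computing $\lc^i_\Phi(M,R)$, the Grothendieck spectral sequence
$$E_2^{p,q}=\lc^p_\Phi(\Ext^q_R(M,R))\underset{p}{\Rightarrow}\lc^{p+q}_\Phi(M,R),$$
exactly as in the proof of Theorem \ref{T:cdle1} (the composite-functor argument $\Gamma_\Phi(\Hom_R(M,-))\cong\Gamma_\Phi(M,-)$ together with \cite[4.4]{AKS} and \cite[10.47]{rotani} goes through verbatim with $X=R$). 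Since $\mathrm{pd}(M)=c<\infty$, we have $\Ext^q_R(M,R)=0$ for $q>c$, so $E_2^{p,q}=0$ whenever $q>c$. Moreover, for each $\fa\in\Phi$ the module $\lc^p_\fa(N)$ vanishes for $p>\dim N\le\dim R=d$ for any finitely generated $N$, hence $\lc^p_\Phi(N)=0$ for $p>d$ as well (using $\lc^p_\Phi(N)\cong\varinjlim_{\fa\in\Phi}\lc^p_\fa(N)$, cf.\ \cite[Lemma 2.1]{BZ1}); thus $E_2^{p,q}=0$ for $p>d$. So the only possibly nonzero $E_2$-term on the anti-diagonal $p+q=c+d$ is $E_2^{d,c}=\lc^d_\Phi(\Ext^c_R(M,R))$, and since every $E_2^{p,q}$ with $p+q=c+d+1$ vanishes (each has $p>d$ or $q>c$), no differential hits or leaves $E_r^{d,c}$; therefore $E_\infty^{d,c}=E_2^{d,c}$ and the abutment filtration collapses to give
$$\lc^{c+d}_\Phi(M,R)\cong\lc^d_\Phi\big(\Ext^c_R(M,R)\big).$$

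Next I would analyse the right-hand side. Put $E=\Ext^c_R(M,R)$, a finitely generated $R$-module with $\dim E\le\dim R=d$. The claim reduces to: $\lc^d_\Phi(E)$ is $\Phi$-cofinite and Artinian for a finitely generated module $E$ over the local ring $(R,\fm)$ with $\dim E\le d$. Using $\lc^d_\Phi(E)\cong\varinjlim_{\fa\in\Phi}\lc^d_\fa(E)$, for each $\fa\in\Phi$ we have $\lc^d_\fa(E)=\lc^{\dim E}_\fa(E)$ if $\dim E=d$ (and $=0$ if $\dim E<d$), which by \cite[Proposition 5.1]{Mel} is $\fa$-cofinite and Artinian. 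The transition maps $\lc^d_\fa(E)\to\lc^d_\fb(E)$ (for $\fb\subseteq\fa$, suitably arranged within $\Phi$) are surjective because top local cohomology $\lc^{\dim E}_{(-)}(E)$ is right exact in the ideal in the relevant sense — more precisely, for the cofinal system one uses that $\lc^{\dim E}_\fa(E)$ only depends on $\Rad(\fa)$ and that a surjection of quotients $E/\fb E\twoheadrightarrow E/\fa E$ induces a surjection on $\lc^d$. A directed limit of Artinian modules with surjective transition maps need not be Artinian in general; the point that saves us is that each $\lc^d_\fa(E)$ is $\fm$-torsion and, being Artinian, has finite length quotients, and the surjective limit of such along a directed set is again $\fm$-torsion Artinian — alternatively, one appeals directly to \cite[Lemma 4.4]{AKS}-type stabilization or to \cite{AB1} to conclude $\lc^d_\Phi(E)$ is Artinian and $\Phi$-cofinite. (In fact the cleanest route is: $\lc^d_\Phi(E)$ is $\Phi$-torsion, and by \cite[4.8, 4.14, 4.15]{AKS} or \cite[2.15]{AB1} a $\Phi$-torsion module whose socle is finitely generated is Artinian and $\Phi$-cofinite; the socle of $\lc^d_\Phi(E)$ embeds in $\Hom_R(R/\fa,\lc^d_\Phi(E))$, which is finitely generated since each $\lc^d_\fa(E)$ is $\fa$-cofinite and the limit commutes appropriately.)

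Finally I would assemble the two steps: the isomorphism $\lc^{c+d}_\Phi(M,R)\cong\lc^d_\Phi(\Ext^c_R(M,R))$ from the spectral sequence, and the fact that the latter is $\Phi$-cofinite Artinian; since $\Phi$-cofiniteness and Artinianness are preserved under isomorphism, this yields the theorem. The main obstacle I anticipate is the second step: controlling the direct limit $\varinjlim_{\fa\in\Phi}\lc^d_\fa(E)$ — specifically verifying that the transition maps form a surjective system over a cofinal subsystem of $\Phi$, and that the limit remains Artinian rather than merely a quotient of something Artinian. I expect this to be handled either by the right-exactness of top local cohomology in the ideal together with the socle argument above, or by directly citing the Artinianness/cofiniteness criteria for $\Phi$-torsion modules already available from \cite{AKS} and \cite{AB1}; the spectral-sequence collapse itself is routine given Theorem \ref{T:cdle1}'s machinery.
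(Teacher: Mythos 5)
Your first step is fine: the spectral sequence $E_2^{p,q}=\lc^p_\Phi(\Ext^q_R(M,R))\Rightarrow\lc^{p+q}_\Phi(M,R)$ does collapse as you say (using $\Ext^q_R(M,R)=0$ for $q>c$ and $\lc^p_\Phi(N)=0$ for $p>\dim N$ by \cite[2.7]{BZ2}), giving $\lc^{c+d}_\Phi(M,R)\cong\lc^d_\Phi(\Ext^c_R(M,R))$. Note this is the transpose of the reduction the paper actually uses, which runs the other Grothendieck spectral sequence $E_2^{p,q}=\Ext^p_R(M,\lc^q_\Phi(R))$ and collapses to $\lc^{c+d}_\Phi(M,R)\cong\Ext^c_R(M,\lc^d_\Phi(R))$; either reduction is legitimate. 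The problem is that your reduction lands you on the statement ``$\lc^{\dim R}_\Phi(E)$ is $\Phi$-cofinite Artinian for every finitely generated $E$,'' which is not a known input but is essentially the whole content of the theorem (it is Corollary \ref{C:dimphicofart} generalized from $R$ to an arbitrary finitely generated module), and your argument for it does not close.

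Concretely, the gap is in the cofiniteness half of the second step. You need $\Hom_R(R/\fa,\lc^d_\Phi(E))$ to be finitely generated, and you propose to get it from $\Hom_R(R/\fa,\lc^d_\Phi(E))\cong\varinjlim_{\fb\in\Phi}\Hom_R(R/\fa,\lc^d_\fb(E))$ with each term finitely generated because $\lc^d_\fb(E)$ is $\fb$-cofinite Artinian by \cite[Proposition 5.1]{Mel}. But a direct limit of finitely generated (even finite length) modules need not be finitely generated, so ``the limit commutes appropriately'' proves nothing; and the surjectivity of the transition maps $\lc^d_\fb(E)\to\lc^d_\fc(E)$ for $\fc\subseteq\fb$ is asserted, not proved (the maps are induced by the inclusions $\Gamma_\fb\subseteq\Gamma_\fc$, not by surjections $E/\fb E\twoheadrightarrow E/\fa E$). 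Incidentally, your worry is misplaced in one direction: a direct limit over a directed set with surjective transition maps \emph{is} a quotient of any single term, hence Artinian if one term is — but since you have not established surjectivity this does not help. This finiteness of the socle is exactly where the paper does real work: it inducts on $d$, passes to $\overline{R}=R/\Gamma_\Phi(R)$, picks $x\in\fa$ regular on $\overline{R}$, and uses the long exact sequence of $0\to\overline{R}\xrightarrow{x}\overline{R}\to\overline{R}/x\overline{R}\to 0$ to trap $\Hom_R(R/\fa,\lc^{c+d}_\Phi(M,\overline{R}))$ inside the socle of the $\Phi$-cofinite Artinian module $\lc^{c+d-1}_\Phi(M,\overline{R}/x\overline{R})$ supplied by induction, and only then applies the ``Artinian with finitely generated $\Hom_R(R/\fa,-)$ implies $\fa$-ETH-cofinite'' criterion \cite[Lemma 2.5]{AB1}. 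To repair your proof you would have to run that same regular-element induction with $E=\Ext^c_R(M,R)$ in place of $R$ (Artinianness itself you can get by citing \cite[Theorem 3.1]{BZ3} directly rather than via limits), at which point your route and the paper's are of comparable length.
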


\begin{proof}Let $p=\mathrm{pd}(M)$ and $d=\dim (R).$ We consider the spectral sequence $$E_2^{p,q}=\Ext_R^p(M,\lc^q_\Phi(R))\underset{p}{\Rightarrow}\lc^{p+q}_\Phi(M,R).$$
	There is a filtration $\phi$ of submodules of $H^{p+d}=H^{p+d}_\Phi(M,R)$  such that
	$$E_\infty^{i,p+d-i}\cong \phi^i\lc^{p+d}/\phi^{i+1}\lc^{p+d}$$
	for all $i\leq p+d.$  By \cite[2.7]{BZ2},	$E_2^{i,j}=0$ for all $i>p$ or $j>d.$ Since $E_\infty^{i,p+d-i}$ is a sub-quotient of $E_2^{i,p+d-i}$, we can assert that $E_\infty^{i,p+d-i}=0$ for all $i\ne p.$ Consequently, $$\phi^0\lc^{p+d}=\ldots=\phi^{p}\lc^{p+d} \text{ and }\phi^{p+1}\lc^{p+d} =\ldots=\phi^{p+d+1}\lc^{p+d}=0.$$
	The homomorphisms of the spectral sequence $$0= E_2^{p-2,d+1}\to E_2^{p,d}\to E_2^{p+2,d-1}=0$$
	induces  that $E_2^{p,d}= E_\infty^{p,d}.$ Hence, there is an isomorphism $$\Ext_R^p(M,\lc^d_\Phi(R))\cong \phi^p\lc^{p+d}/\phi^{p+1}\lc^{p+d}=\lc^{p+d}_\Phi(M,R).$$
	The Artianness of  $\lc^{p+d}_\Phi(M,R)$ is  followed from \cite[Theorem 3.1]{BZ3}.
	
	Our next goal is to prove the cofiniteness by induction on $d.$ When $d=0,$ $R$ is an Artinian $R$-module and $\Supp(R)\subseteq\{\fm\}\subseteq \bigcup_{\fa\in \Phi}V(\fa).$ Hence $R$ is $\Phi$-torsion by \cite[Lemma 2.4]{khagen}. It follows from \cite[Lemma 2.8]{hajcof} that $$H^{p}_\Phi(M,R)\cong \Ext_R^{p}(M,R).$$
	Therefore, $\lc^p_\Phi(M,R)$ is finitely generated and then it is $\Phi$-cofinite.
	
	Assume that $d>0.$ Let $\overline{R}=R/\Gamma_{\Phi}(R)$ which is $\Phi$-torsion-free by \cite[Lemma 2.4]{hajcof}. Using again \cite[Lemma 2.4]{hajcof}, $\overline{R}$ is $\fa$-torsion free for all $\fa\in \Phi.$ We can take an element $x\in \fa$ which is regular on $\overline{R}.$ The short exact sequence
	$$0\to \overline{R}\to \overline{R}\to \overline{R}/x\overline{R}\to 0$$ deduces the following exact sequence
	$$\cdots\overset{f}{\to} \lc^{p+d-1}_\Phi(M,\overline{R}/x\overline{R})\overset{g}{\to} \lc^{p+d}_\Phi(M,\overline{R})\overset{.x}{\to} \lc^{p+d}_\Phi(M,\overline{R})\to 0.$$
	Since $\dim(\overline{R}/x\overline{R})\le d-1,$ by the inductive hypothesis $\lc^{p+d-1}_\Phi(M,\overline{R}/x\overline{R})$ is $\Phi$-cofinite Artinian. Consequently, $\Hom_R(R/\fa,\mathrm{Im}f)$ is finitely generated as it is a submodule of $$\Hom_R(R/\fa,H^{p+d-1}_\Phi(M,\overline{R}/x\overline{R})).$$ Moreover, $\mathrm{Im}f$ is an Artinian $R$-module.  It follows from \cite[Lemma 2.5 and Definition 2.1]{AB1} that $\Ext_R^i(R/\fa,\mathrm{Im}f)$ is finitely generated for all $i\ge 0.$ Therefore, $\Ext_R^i(R/\fa,\mathrm{Im}g)$ is finitely generated for all $i\ge 0.$ In particularly, $\Hom_R(R/\fa,\Image g)$ is finitely generated and
	\begin{align*}
	\Hom_R(R/\fa,\Image g)&=\Hom_R(R/\fa,0:_{\lc^{p+d}_\Phi(M,\overline{R})} x)\\
	&\cong \Hom_R(R/\fa,\lc^{p+d}_\Phi(M,\overline{R})).
	\end{align*}
	The proof above gives that $\lc^{p+d}_\Phi(M,\overline{R})$ is Artinian as $\dim{\overline{R}}\le d.$ By  \cite[Lemma 2.5 and Definition 2.1]{AB1},  $\lc^{p+d}_\Phi(M,\overline{R})$ is $\Phi$-cofinite.
	The short exact sequence
	$$0\to \Gamma_{\Phi}(R)\to R\to \overline{R}\to 0$$
	induces a long exact sequence
	$$\cdots\to \lc^{p+d}_\Phi(M,\Gamma_{\Phi}(R))\to \lc^{p+d}_\Phi(M,R)\to \lc^{p+d}_\Phi(M,\overline{R})\to 0.$$
	We get by \cite[Lemma 2.8]{hajcof} that $\lc^i_\Phi(M,\Gamma_{\Phi}(R))\cong \Ext_R^i(M,\Gamma_{\Phi}(R))$ for all $i\geq 0$ and then $\lc^i_\Phi(M,\Gamma_{\Phi}(R))$ is finitely generated for all $i\geq 0.$
	Therefore $\lc^{p+d}_\Phi(M,R)$ is $\Phi$-cofinite and the proof is complete.
\end{proof}

\begin{cor}\label{C:dimphicofart}
	Let $(R,\fm)$ be a local ring. Then $H^{\dim (R)}_{\Phi}(R)$ is $\Phi$-cofinite Artinian.
\end{cor}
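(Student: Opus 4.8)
The plan is to deduce this immediately from Theorem \ref{T:pd+dim-IcofArtin} by specializing the first variable to $M=R$. First I would observe that $\mathrm{pd}(R)=0$, which is certainly finite, so Theorem \ref{T:pd+dim-IcofArtin} applies with $M=R$ and tells us that $\lc^{\mathrm{pd}(R)+\dim(R)}_{\Phi}(R,R)=\lc^{\dim(R)}_{\Phi}(R,R)$ is $\Phi$-cofinite Artinian.

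The only thing left to do is to identify $\lc^{i}_{\Phi}(R,R)$ with $\lc^{i}_{\Phi}(R)$. This is a direct consequence of the defining formula
$$\lc^{i}_{\Phi}(M,X)\cong\mathop{\varinjlim}\limits_{\fa\in\Phi}\Ext^i_R(M/\fa M,X):$$
taking $M=R$ gives $M/\fa M=R/\fa$, so $\lc^{i}_{\Phi}(R,X)\cong\mathop{\varinjlim}\limits_{\fa\in\Phi}\Ext^i_R(R/\fa,X)=\lc^{i}_{\Phi}(X)$ for every $R$-module $X$ and every $i\ge 0$. (Equivalently, one notes that $\Gamma_\Phi(R,-)=\Hom_R(R,\Gamma_\Phi(-))=\Gamma_\Phi(-)$ and that the derived functors of naturally isomorphic left exact functors agree.) In particular $\lc^{\dim(R)}_{\Phi}(R,R)\cong\lc^{\dim(R)}_{\Phi}(R)$, and combining this with the previous paragraph yields the claim.

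There is no genuine obstacle here: the statement is a straightforward corollary, and no argument is needed beyond the specialization $M=R$ together with the elementary identification $\lc^{i}_{\Phi}(R,-)\cong\lc^{i}_{\Phi}(-)$. It is worth remarking that this also recovers, in the special case $\Phi=\{\fa^n\}$, the classical fact that $\lc^{\dim(R)}_{\fa}(R)$ is $\fa$-cofinite Artinian.
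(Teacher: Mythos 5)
Your proposal is correct and is exactly the derivation the paper intends: the corollary is stated without proof as the specialization $M=R$ of Theorem \ref{T:pd+dim-IcofArtin}, using $\pd(R)=0$ and the identification $\lc^{i}_{\Phi}(R,-)\cong\lc^{i}_{\Phi}(-)$ coming from the defining direct limit. Nothing further is needed.
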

When $\dim(M)\leq 2$ or $\dim (X)\leq 2, $ the cofiniteness of $H^i_I(M,X)$ was shown in \cite[Theorem 2.9]{mafcf} or \cite[Theorem 1.3]{CGH}. We also see in \cite[Corollary 2.15]{AB1} that if $(R,\fm)$ is a local ring with $\dim(R)\leq 2,$ then $\lc^i_\Phi(X)$ is $\Phi$-cofinite for all $i\geq 0.$ The following result might be considered as a generalization of these facts.
\begin{thm}\label{T:dimle2}
	Let $(R,\fm)$ be a local ring and $M,X$ two finitely generated $R$-modules with $\mathrm{pd}(M)<\infty.$ If $\dim(X)\le 2$ or $\dim (M)\le 2,$ then $\lc^{i}_{\Phi}(M,X)$ is $\Phi$-cofinite for  all $i\ge 0.$
\end{thm}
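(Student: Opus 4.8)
The plan is to feed the hypothesis $\pd(M)<\infty$ into the two Grothendieck spectral sequences of $\lc^\bullet_\Phi(M,X)$ that already appear in this paper, namely
$$E_2^{p,q}=\lc^p_\Phi(\Ext^q_R(M,X))\underset{p}{\Rightarrow}\lc^{p+q}_\Phi(M,X)$$
(as in the proof of Theorem \ref{T:cdle1}) and
$$E_2^{p,q}=\Ext^p_R(M,\lc^q_\Phi(X))\underset{p}{\Rightarrow}\lc^{p+q}_\Phi(M,X)$$
(as in the proofs of Corollary \ref{nam} and Theorem \ref{T:pd+dim-IcofArtin}). First I would record the single-variable fact that will be the engine of both cases: if $Y$ is a finitely generated $R$-module with $\dim Y\le 2$, then $\lc^i_\Phi(Y)$ is an ${\rm FD}_{\le 1}$ module for all $i\ge 0$. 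For $\dim R\le 2$ this is \cite[Corollary 2.15]{AB1}, and that proof in fact produces an ${\rm FD}_{\le 1}$ module (one uses that $\Gamma_\Phi(Y)\subseteq Y$ is finitely generated, that the top module $\lc^{\dim Y}_\Phi(Y)$ is Artinian, and a regular-element short exact sequence to control the intermediate degree); to pass from $\dim R\le 2$ to $\dim Y\le 2$ I would work over $\overline R:=R/\Ann_R Y$, using that $\lc^i_\Phi(Y)\cong\lc^i_{\overline\Phi}(Y)$ (independence of the general local cohomology functor from the base ring, $\overline\Phi$ being the image of $\Phi$ in $\overline R$), that $\dim\overline R=\dim Y\le 2$, and that an ${\rm FD}_{\le 1}$ module over $\overline R$ is ${\rm FD}_{\le 1}$ over $R$.

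Granting this, in the case $\dim X\le 2$ I would use the second spectral sequence. Each $\lc^q_\Phi(X)$ is ${\rm FD}_{\le 1}$ by the fact above, hence so is $E_2^{p,q}=\Ext^p_R(M,\lc^q_\Phi(X))$ by Remark \ref{R:tinhchatminimax}(iii) (this is where finite generation of $M$ is essential). Since $\pd(M)<\infty$ and $\cd(\Phi)\le\dim R<\infty$, the spectral sequence is confined to a finite rectangle and degenerates after finitely many pages; as the class of ${\rm FD}_{\le 1}$ modules is closed under subquotients (Remark \ref{R:tinhchatminimax}(ii)), every $E_r^{p,q}$ and hence every $E_\infty^{p,q}$ is ${\rm FD}_{\le 1}$, and since that class is also closed under extensions the finite filtration of $\lc^n_\Phi(M,X)$ with quotients $E_\infty^{p,n-p}$ forces $\lc^n_\Phi(M,X)$ to be ${\rm FD}_{\le 1}$ for every $n$. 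Finally, since $X$ is finitely generated all $\Ext^i_R(R/\fa,X)$ are finitely generated, so Corollary \ref{cof} applies and gives that $\lc^i_\Phi(M,X)$ is $\Phi$-cofinite for all $i$.

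In the case $\dim M\le 2$ I would run the same argument with the first spectral sequence: $\Ext^q_R(M,X)$ is finitely generated and $\Supp_R\Ext^q_R(M,X)\subseteq\Supp_R M$, so $\dim\Ext^q_R(M,X)\le\dim M\le 2$, whence $E_2^{p,q}=\lc^p_\Phi(\Ext^q_R(M,X))$ is ${\rm FD}_{\le 1}$ by the single-variable fact; the rectangle is again finite ($p\le\cd(\Phi)<\infty$, $q\le\pd(M)<\infty$), so exactly as above $\lc^n_\Phi(M,X)$ is ${\rm FD}_{\le 1}$ for all $n$ and Corollary \ref{cof} finishes.

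The step I expect to be the real obstacle is the single-variable fact: upgrading \cite[Corollary 2.15]{AB1} from the conclusion ``$\Phi$-cofinite'' to ``${\rm FD}_{\le 1}$'' and from the hypothesis $\dim R\le 2$ to $\dim Y\le 2$. Everything else is routine spectral-sequence bookkeeping whose only subtle point is finiteness of the rectangle — which is precisely where $\pd(M)<\infty$ (together with $\dim R<\infty$) is used; without it one only controls finitely many cohomological degrees at a time and the argument would stall.
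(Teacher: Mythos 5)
Your argument is correct, but it takes a genuinely different route from the paper's. The paper works only with the first spectral sequence $E_2^{p,q}=\lc^p_{\Phi}(\Ext^q_R(M,X))\Rightarrow\lc^{p+q}_{\Phi}(M,X)$ (both hypotheses give $\dim\Ext^q_R(M,X)\le 2$), notes $E_2^{p,q}=0$ for $p>2$, and then assembles $\Phi$-cofiniteness degree by degree along the three-step filtration: $E_\infty^{0,q}$ is finitely generated, $E_\infty^{1,q-1}=E_2^{1,q-1}$ is handled by \cite[Corollary 2.15]{AB1}, and $E_\infty^{2,q-2}$ is controlled by the $\Phi$-cofinite Artinianness of the top general local cohomology (Corollary \ref{C:dimphicofart}) together with the criterion of \cite[Lemma 2.5]{AB1}; this requires some care with the differentials and Ext long exact sequences. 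You instead prove the stronger statement that every $\lc^n_\Phi(M,X)$ is ${\rm FD}_{\leq 1}$ and then quote Corollary \ref{cof}/Theorem \ref{homext1}, which is cleaner and makes the cofiniteness conclusion almost formal. The one ingredient you flag as the ``real obstacle'' --- that $\lc^i_\Phi(Y)$ is ${\rm FD}_{\leq 1}$ for $Y$ finitely generated with $\dim Y\le 2$ --- is true but is much easier than your sketched route through $R/\Ann_R(Y)$ and an upgrade of \cite[Corollary 2.15]{AB1}: by the standard bound $\dim\Supp\lc^i_{\fa}(Y)\le\dim Y-i$ for $i\ge 1$ and the direct limit description $\lc^i_\Phi(Y)\cong\varinjlim_{\fa\in\Phi}\lc^i_{\fa}(Y)$ of \cite[Lemma 2.1]{BZ1}, one gets $\dim\Supp\lc^i_\Phi(Y)\le 2-i\le 1$ for $i\ge 1$, while $\Gamma_\Phi(Y)\subseteq Y$ is finitely generated; this is exactly the style of argument the paper itself uses in Corollary \ref{cof2} and Theorem \ref{moh}. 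Two small corrections to your bookkeeping: the finite filtration of $\lc^n_\Phi(M,X)$ with subquotients $E_\infty^{i,n-i}$, $0\le i\le n$, and the fact that each $E_\infty^{p,q}$ is a subquotient of $E_2^{p,q}$, hold for any first-quadrant Grothendieck spectral sequence, so no ``finite rectangle'' is needed and $\pd(M)<\infty$ does not actually enter your argument (if anything, your approach proves the theorem without that hypothesis); and Corollary \ref{cof} tacitly needs $\Ext^i_R(R/\fa,X)$ finitely generated, which you correctly secure from the finite generation of $X$.
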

\begin{proof} There is a Grothendieck spectral sequence
	$$E_2^{p,q}=\lc^p_{\Phi}(\Ext^q_R(M,X))\underset{p}{\Rightarrow}\lc^{p+q}_{\Phi}(M,X).$$
	The assumption shows that $\dim(\Ext_R^i(M,X))\le 2$ and $\Ext_R^i(M,X)$ is finitely generated  for all $i\ge 0.$
	
	By Corollary \ref{C:dimphicofart},	$\lc^2_{\Phi}(\Ext^q_R(M,X))$ is $\Phi$-cofinite for all $q\ge 0.$  On the other hand	
	$\lc^p_{\Phi}(\Ext^q_R(M,X))=0$ for all $p>2,q\ge 0$ by
	\cite[ 2.7]{BZ2}.
	Let $q\ge 0,$ the homomorphisms of spectral sequence
	$$0=E_{2}^{i-2,q-i+1}\to E_{2}^{i,q-i}\to E_{2}^{i+2,q-i-1}=0$$
	induce
	$$E_{2}^{i,q-i}=E_\infty^{i,q-i}$$ 	where $i=0,1.$
	There is a filtration $\phi$ of submodules of $\lc^q=\lc^q_\Phi(M,X)$ such that
	$$E_\infty^{i,q-i} \cong \phi^i\lc^q/\phi^{i+1}\lc^q$$
	for all $0\le i\le q.$ 	Since $E_\infty^{i,q-i}$ is a sub-quotient of $E_2^{i,q-i},$ we claim that $E_\infty^{i,q-i}=0$ for all $i>2.$ Thus, 	$$\phi^3\lc^q=\phi^4\lc^q=\ldots=\phi^{q+1}\lc^q=0.$$
	The homomorphisms of spectral sequence
	$$0\to E_{2}^{0,q-1}\overset{d_2^{0,q-1}}{\to} E_{2}^{2,q-2}\overset{d_2^{2,q-2}}{\to} E_{2}^{4,q-3}=0$$
	turn out $$E_\infty^{2,q-2}=E_3^{2,q-2}=\Ker d_2^{2,q-2}/\Image d_2^{0,q-1}=E_{2}^{2,q-2}/\Image d_2^{0,q-1}.$$
	The short exact sequence
	$$0\to \Image d_2^{0,q-1}\to E_{2}^{2,q-2}\to E_\infty^{2,q-2}\to 0$$
	induces the following exact sequence
	$$\Hom_R(R/\fa,E_2^{2,q-2})\lo \Hom_R(R/\fa,E_\infty^{2,q-2})\lo \Ext_R^1(R/\fa,\Image d_2^{0,q-1}).$$
	Since $E_2^{2,q-2}$ is $\Phi$-cofinite Artinian by Corollary \ref{C:dimphicofart}, $\Hom_R(R/\fa,E_2^{2,q-2})$ is finitely generated. Furthermore, $\Image d_2^{0,q-1}$ is finitely generated.  Therefore, $\Hom_R(R/\fa,E_\infty^{2,q-2})$ is finitely generated. Note that $E_\infty^{2,q-2}$ is Artinian as it is a sub-quotient of the Artinian $R$-module $E_2^{2,q-2}=\lc^2_\Phi(\Ext_R^{q-2}(M,X))$ by Corollary \ref{C:dimphicofart}. It follows from \cite[Lemma 2.5 and Definition 2.1]{AB1} that $\Ext_R^i(R/\fa,E_\infty^{2,q-2})$ is finitely generated for all $i\ge 0.$
	
	The equality $E_\infty^{1,q-1}=E_{2}^{1,q-1}$ shows that	$\phi^1\lc^q/\phi^2\lc^q$ is $\Phi$-cofinite by \cite[Corolary 2.15]{AB1}. On the other hand, we have $\phi^2\lc^{q}\cong E_\infty^{2,q-2}$ and then
	$\Ext_R^i(R/\fa,\phi^1\lc^q)$	 is finitely generated for all $i\ge 0.$
	
	Note that 	$\phi^0\lc^q/\phi^1\lc^q\cong E_\infty^{0,q}$ is finitely generated since $E_\infty^{0,q}$ is a sub-quotient of $E_2^{0,q}$ which is finitely generated.	
	Thus $\Ext_R^i(R/\fa,\phi^0\lc^q)$ 	 is finitely generated for all $i\ge 0.$ This means that the module
	$\phi^0\lc^q=\lc^q_{\Phi}(M,X)$ is $\Phi$-cofinite.
\end{proof}
\begin{cor}\label{C:dimRle2}
	Let $(R,\fm)$ be a local ring with $\dim(R)\le 2.$ Then $\lc^{i}_{\Phi}(M,X)$ is $\Phi$-cofinite for  all $i\ge 0.$
\end{cor}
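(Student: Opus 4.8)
The plan is to deduce this immediately from Theorem \ref{T:dimle2}. Here $M$ and $X$ are finitely generated $R$-modules with $\mathrm{pd}(M)<\infty$, the standing hypotheses inherited from Theorem \ref{T:dimle2}. First I would observe that, since $M$ is finitely generated over the local ring $(R,\fm)$, one has $\Supp_R(M)\subseteq\Spec R$, hence $\dim(M)=\dim\big(R/\Ann_R(M)\big)\le\dim(R)\le 2$; likewise $\dim(X)\le\dim(R)\le 2$, so either alternative in the hypothesis of Theorem \ref{T:dimle2} is satisfied. Then Theorem \ref{T:dimle2} applies verbatim and yields that $\lc^{i}_{\Phi}(M,X)$ is $\Phi$-cofinite for all $i\ge 0$, which is exactly the claim.

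There is essentially no obstacle: all the genuine content sits in Theorem \ref{T:dimle2}, and the only thing to verify is the elementary dimension bound $\dim(M)\le\dim(R)$, a standard consequence of the finite generation of $M$ over $R$. If a self-contained argument were preferred, one could instead re-run the Grothendieck spectral sequence $E_2^{p,q}=\lc^p_{\Phi}(\Ext^q_R(M,X))\Rightarrow\lc^{p+q}_{\Phi}(M,X)$ as in the proof of Theorem \ref{T:dimle2}, exploiting $\dim\big(\Ext^q_R(M,X)\big)\le\dim(R)\le 2$ for all $q\ge 0$ together with Corollary \ref{C:dimphicofart}, \cite[2.7]{BZ2} and \cite[Corollary 2.15]{AB1}; but since this simply reproduces the earlier proof, invoking Theorem \ref{T:dimle2} directly is the cleaner route.
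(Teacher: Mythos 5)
Your deduction is correct and is exactly what the paper intends: Corollary \ref{C:dimRle2} is stated without proof as an immediate consequence of Theorem \ref{T:dimle2}, using that every finitely generated module over the local ring $(R,\fm)$ has dimension at most $\dim(R)\le 2$. Your remark that the hypothesis $\pd(M)<\infty$ must be carried over from Theorem \ref{T:dimle2} (it is tacitly assumed, though omitted from the corollary's statement) is the right reading.
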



The following theorem is a generalization of \cite[Theorem 3.6]{BA}.

\begin{thm}\label{moh}
Let $R$ be a semi-local Noetherian ring and $\Phi$ a system of ideals of $R$. Let $M$ and $X$ be two $\fa$-$ETH$-cofinite $R$-module for all $\fa\in \Phi$ {\rm (}or finitely generated{\rm)}. If $\dim R/{\fa}\leq0$ for all $\fa\in \Phi$, then $\lc^{i}_{\Phi}(M,X)$ is $\Phi$-cofinite Artinian for all $i\ge 0.$
\end{thm}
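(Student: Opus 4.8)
The plan is to reduce the statement to a finite intersection of ideals and then handle each one via the localized structure $\dim R/\fa = 0$. First I would unwind what $\dim R/\fa \le 0$ means in a semi-local ring: every $\fa \in \Phi$ has $V(\fa) \subseteq \Max R$, which is a finite set, so $R/\fa$ is an Artinian ring. Consequently, for any $R$-module $Y$ that is $\fa$-$ETH$-cofinite, $\Ext^j_R(R/\fa, Y)$ is a finitely generated $R/\fa$-module, hence has finite length, hence is Artinian; moreover $R/\fa$-modules of finite length are also finitely generated over $R$, so ``finitely generated'' and ``Artinian'' coincide on the relevant Ext-modules.

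**Next I would produce the cohomology via a spectral sequence.** Since $M$ and $X$ are $\fa$-$ETH$-cofinite for all $\fa \in \Phi$, I expect $\Ext^q_R(M, X)$ (more precisely, each $\Ext$ against a finite free resolution of $M$, which exists after replacing $M$ by a suitable approximation — here I would invoke Lemma \ref{equ} together with Remark \ref{R:tinhchatminimax} to keep the $ETH$-cofiniteness through $\Ext$'s, or simply note $M$ finitely generated in the parenthetical case) to again be $\fa$-$ETH$-cofinite. Then apply the Grothendieck spectral sequence
\[
E_2^{p,q} = \lc^p_\Phi(\Ext^q_R(M,X)) \underset{p}{\Rightarrow} \lc^{p+q}_\Phi(M,X),
\]
exactly as in the proofs of Theorems \ref{T:cdle1} and \ref{T:dimle2}. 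The key input is that for a module $Y$ with $\dim Y \le 0$ one has $\lc^p_\Phi(Y) = 0$ for $p > 0$ and $\lc^0_\Phi(Y) = \Gamma_\Phi(Y) = Y$ when $Y$ is $\Phi$-torsion (which it is, since $\Supp Y \subseteq \Max R = \bigcup_{\fa} V(\fa)$, by \cite[Lemma 2.4]{khagen}). For general $Y$ one decomposes via $0 \to \Gamma_\Phi(Y) \to Y \to \bar Y \to 0$; but here since $\Ext^q_R(M,X)$ is $\fa$-$ETH$-cofinite with support in $\Max R$, I claim it is already $\Phi$-torsion, so the spectral sequence collapses to $E_2^{0,q} = \Ext^q_R(M,X)$ and hence $\lc^i_\Phi(M,X) \cong \Ext^i_R(M,X)$ for all $i$.

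**Then the conclusion is immediate:** $\Ext^i_R(M,X)$ is $\fa$-$ETH$-cofinite (so $\Phi$-cofinite) by the hypothesis on $M,X$, and it has support in $\Max R$, so by \cite[Lemma 2.5 and Definition 2.1]{AB1} it is Artinian. Thus $\lc^i_\Phi(M,X)$ is $\Phi$-cofinite Artinian for all $i \ge 0$. In the ``finitely generated'' case, $\Ext^i_R(M,X)$ is finitely generated with $0$-dimensional support, hence of finite length, hence Artinian, and trivially $\Phi$-cofinite.

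**The main obstacle** I anticipate is the bookkeeping around replacing $M$ by a finite free resolution while preserving $ETH$-cofiniteness of the Ext modules: when $M$ is only $\fa$-$ETH$-cofinite rather than finitely generated, one does not have a finite free resolution, so the definition of $\lc^i_\Phi(M,X)$ via $\varinjlim \Ext^i_R(M/\fa M, X)$ must be used directly, and one must check that $M/\fa M$ has the right finiteness (it is $R/\fa$-module, but may not be finitely generated). I would likely need Lemma \ref{equ} and the colimit description $\lc^i_\Phi(M,X) \cong \varinjlim_{\fa} \lc^i_\fa(M,X)$ from \cite[Lemma 2.1]{BZ1}, reducing to the single-ideal statement \cite[Theorem 3.6]{BA} for each $\fa$ and then checking the colimit over the directed system stays $\Phi$-cofinite Artinian — which works because all the ideals share the finite common max-spectrum and the relevant Ext-modules have uniformly bounded (zero-dimensional, finite-length) behavior.
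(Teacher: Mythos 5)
There is a genuine gap in your main argument: the claim that $\Ext^q_R(M,X)$ is $\Phi$-torsion (equivalently, supported in $\Max(R)$) is unjustified and false in general. The hypothesis $\dim R/\fa\le 0$ constrains the support of $R/\fa$ and hence of the modules $\lc^q_\fa(X)$; it says nothing about $\dim M$ or $\dim X$, and nothing forces $\Supp_R(\Ext^q_R(M,X))\subseteq\Max(R)$. For instance, take $R$ local of dimension $d\ge 1$, $M=X=R$ and $\Phi=\{\fm^n\}$: then $\Ext^0_R(M,X)=R$ has full support, the spectral sequence $E_2^{p,q}=\lc^p_\Phi(\Ext^q_R(M,X))$ does not collapse onto the row $p=0$, and your asserted isomorphism $\lc^i_\Phi(M,X)\cong\Ext^i_R(M,X)$ fails already because $\lc^{d}_\Phi(R,R)=\lc^{d}_\fm(R)\neq 0=\Ext^{d}_R(R,R)$. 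So the collapse step, and with it your ``immediate conclusion,'' breaks down even in the finitely generated case. Your fallback sketch (pass to $\varinjlim_{\fa}\lc^i_\fa(M,X)$ and quote the single-ideal result for each $\fa$) is also not enough as stated: a directed colimit of Artinian (or cofinite) modules need not be Artinian (or cofinite), so ``checking the colimit stays $\Phi$-cofinite Artinian'' is exactly the point requiring proof.

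The correct place to exploit $\dim R/\fa\le 0$ is the second variable of the \emph{other} Grothendieck spectral sequence. From $\lc^q_\Phi(X)\cong\varinjlim_{\fa\in\Phi}\lc^q_\fa(X)$ one gets $\Supp_R(\lc^q_\Phi(X))\subseteq\bigcup_{\fa\in\Phi}V(\fa)\subseteq\Max(R)$, a finite set since $R$ is semi-local; by \cite[Corollary 3.4]{BA} each $\lc^q_\Phi(X)$ is therefore Artinian, in particular ${\rm FD_{\leq 1}}$. Feeding this into $E_2^{p,q}=\Ext^p_R(M,\lc^q_\Phi(X))\Rightarrow\lc^{p+q}_\Phi(M,X)$ (the spectral sequence of Corollary \ref{nam}) gives Artinian $E_2$-terms, hence an Artinian abutment, and Theorem \ref{homext1} together with Remark \ref{R:tinhchatminimax}(i) then yields the $\Phi$-cofiniteness. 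This is the paper's route; your preliminary observations that $R/\fa$ is Artinian and that the relevant Ext-modules have finite length are fine, but they do not repair the misplaced support claim.
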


\begin{proof}
Since  by \cite[Lemma 2.1]{BZ1}, $$\lc^{i}_\Phi(M)\cong\underset { \fa \in \Phi}
\varinjlim \lc^{i}_\fa(M),$$ it is easy to see that $\Supp_R(\lc^{i}_\Phi(M))
\subseteq\underset{\fa \in \Phi}\bigcup \Supp_R(\lc^{i}_\fa(M))$ and therefore
$$\dimSupp \lc^{i}_\Phi(M) \leq \sup \{ \dimSupp \lc^{i}_{\fa}(M)| \fa \in
\Phi \} \leq 0,$$ thus $\Supp_R(\lc^{i}_\Phi(M))\subseteq \Max(R)$. Now, it follows from \cite[Corolary 3.4]{BA} that $\lc^{i}_\Phi(M)$ is an Artinian $R$-module for all $i\geq 0$. Therefore Remark \ref{R:tinhchatminimax} (i) and Corollary \ref{nam} imply that $\lc^{i}_\Phi(M,X)$ is $\Phi$-cofinite for all $i\geq 0$.
\end{proof}


\begin{center}
{\bf Acknowledgments}
\end{center}

The authors are deeply
grateful to the referee for careful reading of the manuscript and for the helpful
suggestions.

\bibliographystyle{amsplain}

\end{document}